    \numberwithin{equation}{section}
\newcommand{\F}{\mathbb{F}}
\newcommand{\Hq}{\mathbb{H}}
\newcommand{\N}{\mathbb{N}}
\newcommand{\C}{\mathbb{C}}
\newcommand{\D}{\mathbb{D}}
\newcommand{\bR}{\mathbb{R}}
\newcommand{\B}{\mathcal{B}}
\newcommand{\Ca}{\mathcal{C}}
\newcommand{\Fed}{\mathcal{F}}
\newcommand{\Hi}{\mathcal{H}}
\newcommand{\K}{\mathcal{K}}
\newcommand{\Ls}{\mathcal{L}}
\newcommand{\ich}{{\rm iconv\,}}
\newcommand{\im}{{\rm Im\,}}
\newcommand{\re}{{\rm Re\,}}
\newcommand{\w}[1]{\omega^{(#1)}}
\newcommand{\inn}[2]{\langle #1, #2 \rangle}
\newcommand{\innt}[1]{\langle T#1, #1 \rangle}
\newcommand{\e}{\varepsilon}
\newcommand{\ch}{\text{conv}}
\newcommand{\Span}{\text{span}\,}
\newcommand{\diag}{\text{diag}\,}
\newcommand\restr[2]{{% we make the whole thing an ordinary symbol
  \left.\kern-\nulldelimiterspace % automatically resize the bar with \right
  #1 % the function
%  \vphantom{\big|} % pretend it's a little taller at normal size
  \right|_{#2} % this is the delimiter
  }}
\newtheorem{theorem}{Theorem}[section]
 \newtheorem{corollary}[theorem]{Corollary}
 \newtheorem{lemma}[theorem]{Lemma}
 \newtheorem{proposition}[theorem]{Proposition}
 \theoremstyle{definition}
 \newtheorem{definition}[theorem]{Definition}
 \newtheorem{remark}[theorem]{Remark}
 \numberwithin{equation}{section}
\newtheorem*{rep@theorem}{\rep@title}
\newcommand{\newreptheorem}[2]{
\newenvironment{rep#1}[1]{%
\def\rep@title{#2 \ref{##1}}%
\begin{rep@theorem}}%
{\end{rep@theorem}}}
\begin{document}

%\thanks{The second author was partially supported by FCT through project UID/MAT/04459/2019 and the third author was partially supported by FCT through CMA-UBI, project PEst-OE/MAT/UI0212/2013.}

%\title{On the S-spectrum and quaternionic numerical range of operators}
\title{On the convexity of the quaternionic essential numerical range}
%\title{The S-spectrum and numerical range in the quaternionic setting}

%----------Author 1
\author[L. Carvalho]{Lu\'{\i}s Carvalho}
\address{Lu\'{\i}s Carvalho, ISCTE - Lisbon University Institute\\    Av. das For\c{c}as Armadas\\     1649-026, Lisbon\\   Portugal}
\email{luis.carvalho@iscte-iul.pt}
\author[Cristina Diogo]{Cristina Diogo}
\address{Cristina Diogo, ISCTE - Lisbon University Institute\\    Av. das For\c{c}as Armadas\\     1649-026, Lisbon\\   Portugal\\ and \\ Center for Mathematical Analysis, Geometry,
and Dynamical Systems\\ Mathematics Department,\\
Instituto Superior T\'ecnico, Universidade de Lisboa\\  Av. Rovisco Pais, 1049-001 Lisboa,  Portugal
}
\email{cristina.diogo@iscte-iul.pt}
\author[S. Mendes]{S\'{e}rgio Mendes}
\address{S\'{e}rgio Mendes, ISCTE - Lisbon University Institute\\    Av. das For\c{c}as Armadas\\     1649-026, Lisbon\\   Portugal\\ and Centro de Matem\'{a}tica e Aplica\c{c}\~{o}es \\ Universidade da Beira Interior \\ Rua Marqu\^{e}s d'\'{A}vila e Bolama \\ 6201-001, Covilh\~{a}}
\email{sergio.mendes@iscte-iul.pt}
\author[H. Soares]{Helena Soares}
\address{Helena Soares, ISCTE - Lisbon University Institute\\    Av. das For\c{c}as Armadas\\     1649-026, Lisbon\\   Portugal}
\email{helena.soares@iscte-iul.pt}
%----------classification, keywords, date
\subjclass[2010]{47A12, 47S05}

\keywords{quaternions, numerical range, essential numerical range}
\date{\today}

\maketitle

\begin{abstract}

The numerical range in the quaternionic setting is, in general, a non convex subset of the quaternions. The essential numerical range is a refinement of the numerical range that only keeps the elements that have, in a certain sense, infinite multiplicity. We prove that the essential numerical range of a bounded linear operator on a quaternionic Hilbert space is convex. A quaternionic analogue of Lancaster theorem, relating the closure of the numerical range and its essential numerical range, is also provided.
%\textsc{Previous version}
%
%{\color{magenta}{We give a simple proof that the essential numerical range of a bounded linear operator on
%a quaternionic Hilbert space is convex. A quaternionic analogue of Lancaster theorem is
%also established, based on the notion of inter-convex hull of sets.}}
\end{abstract}
\maketitle
%\tableofcontents
\maketitle
%%%%%%%%%%%%%%%%%%%%%%%%%%%%%%%%%%%%%%%%%%%%%%%%%%%%%%%%%%%%%%
%%%%%%%%%%%%%%%%%%%%%%%%%%%%%%%%%%%%%%%%%%%%%%%%%%%%%%%%%%%%%%
\section*{Introduction}

Let $\mathbb{F}$ be the field of complex numbers or the skew field $\Hq$ of Hamilton quaternions. Let $\mathcal{H}$ be a Hilbert space over $\mathbb{F}$ and let $T$ be a bounded linear operator on $\mathcal{H}$. The numerical range of $T$ is the set
\[
W(T)=W_{\mathbb{F}}(T)=\{\langle Tx,x\rangle:\|x\|=1, x\in\mathcal{H}\},
\]
where $\langle \cdot\,,\cdot\,\rangle:\mathcal{H}\times\mathcal{H}\to\mathbb{F}$ is the inner product on $\mathcal{H}$. This subset of $\mathbb{F}$ was introduced and studied by Toeplitz in 1918, who proved that, when $\F=\C$, the outer boundary of $W(T)$ is a convex curve and conjectured that the whole numerical range was convex, see \cite{To}. Shortly after, in 1919, Hausdorff \cite{Ha} proved the conjecture. Since then, this result is known as the Toeplitz-Hausdorff Theorem.

Over the years, the investigation of the numerical range continuously increased, including the cases of linear operators on infinite dimensional complex Hilbert spaces and complex Banach spaces. In 1951, Kippenhahn \cite{Ki} introduced the study of numerical range for quaternionic operators, i.e, when $\F=\Hq$. Soon it became evident that, although sharing many properties of its complex counterpart, the quaternionic numerical range was no longer always convex. %%%%%%%%%%%%%%%%%
The bild of an operator $T$, also introduced in \cite{Ki}, is the intersection $B(T)=W_{\Hq}(T)\cap\C$. Since every quaternion is, up to unitary equivalence, a complex number, many properties of the numerical range are encoded in the bild, including convexity. In fact, $W_{\Hq}(T)$ is convex if, and only if, $B(T)$ is convex, see \cite{CDM3}. However, the upper bild $B^+(T)$, which is the intersection of $W_{\Hq}(T)$ with the closure of the upper half-plane, is always convex. %%%%%%%%%%%%%%%%%%%%%%%%%%%%%%%
The pursuit of convexity remained an important issue in the quaternionic setting, with Au-Yeung establishing in \cite{Ye1} necessary and sufficient conditions for $W_{\mathbb{H}}(T)$ to be convex.

In a series of recent papers \cite{CDM1} - \cite{CDM5} the convexity and shape of the numerical range of quaternionic matrices have been studied by the first three named autors. The notion of S-spectrum in \cite{CGSS} and its relation with the numerical range on infinite dimensional quaternionic Hilbert spaces was adressed in the recent preprint \cite{CDM6}. Another geometric object in the realm of infinite dimensional Hilbert spaces is the essential numerical range of an operator $T$. It is defined as the set
\[
W_{e}(T)=W_{e,\F}(T)=\bigcap_{K\in\K(\Hi)}\overline{W(T+K)},
\]
where $\mathcal{K}(\mathcal{H})$ denotes the set of compact operators on the $\F$-Hilbert space $\mathcal{H}$. Taking $K$ to be the zero operator in the above definition, we see that $W_{e,\F}(T)\subseteq\overline{W_{\F}(T)}$.

This paper is devoted to the study of the essential numerical range in the quaternionic setting. The main result is theorem \ref{thm convexity} where we show that, for $\F=\Hq$, the essential numerical range $W_{e}(T)=W_{e,\Hq}(T)$ is always a convex set. Thus, at least convexity of this essential part of the numerical range is guaranteed even in the quaternionic setting. We emphasize that this is a surprising and unexpected result since the essential numerical range is the intersection of non-convex sets and nothing indicates it is convex in its formulation.

To secure this result we use a general property (lemma \ref{lemmasequence}): given a pair of unitary sequences $x_n^{(1)}, x_n^{(2)}$ and $T\in \B(\Hi)$, a judiscious choice of $N$ and $M$ shows that the following vectors are close to orthogonal
\[
\inn{x_N}{y_M} \approx  \inn{Tx_N}{y_M} \approx \inn{T^*x_N}{y_M}\approx 0.
\]
%This is possible because we take $N$ such that the norms of $x_N$, $Tx_N$ and $T^*x_n$ are almost determined by the first $K$ coordinates; then we pick $M$,  typically much bigger then $N$,  in a way that $y_M$ has the first $K$ coordinates with entries close to zero.
We can then form an essential sequence (see definition \ref{def_ess_seq}) for the convex combination $\alpha^2 \omega^{(1)}+\beta^2\omega^{(2)}$, $\omega^{(1)}, \omega^{2}\in W_e(T)$, with elements of the form $\alpha x_N^{(1)}+\beta y_M^{(2)}$. The referred quasi orthogonality implies that
\begin{align*}
&\|\alpha x_N^{(1)}+\beta x_M^{(2)}\|^2\approx \alpha^2\inn{x_N^{(1)}}{x_N^{(1)}}+\beta^2 \inn{x_M^{(2)}}{x_M^{(2)}}=1\\
&\langle T(\alpha x_N^{(1)}+\beta x_M^{(2)}), \alpha x_N^{(1)}+\beta x_M^{(2)}\rangle\approx \alpha^2\langle T x_N^{(1)}, x_N^{(1)}\rangle+\beta^2 \langle T x_M^{(2)}, x_M^{(2)}\rangle\approx \alpha^2 \omega^{(1)}+\beta^2\omega^{(2)}.
\end{align*}

%
%
%
% to the sequential characterization of the elements in $W_e(T)$ (see theorem \ref{theoEssNRseq} and definition \ref{def_ess_seq}). Using essential sequences $(x_n)_n$ and $(y_n)_n$ for $\omega_x$ and $\omega_y$ in $W_e(T)$, we can, with a

We finish the paper with theorem \ref{thm lancaster quaternion}, where we prove a quaternionic version of Lancaster theorem relating the numerical range and the essential numerical range, see \cite{L}. Due to the nonconvexity of the numerical range, we need to introduce the notion of inter-convex hull (see (\ref{intraconvex hull})). The result asserts that the closure of the quaternionic numerical range is precisely the inter-convex hull of the quaternionic essential numerical range and the quaternionic numerical range, \emph{i.e} $\overline{W(T)}=\ich\{W_e(T), W(T)\}$. In spite of the formal similarities with its complex counterpart, there are worth mentioning differences. Foremost we can not infer that the numerical range is closed when it contains the essential numerical range (see remark \ref{remark_lancaster}) as in complex Hilbert spaces \cite[Corollary $1$]{L}. This is because the quaternionic numerical range lacks convexity and the quaternionic Lancaster theorem uses the weaker notion of inter-convex hull. In addition,  remark \ref{remark_lancaster} tells us that, even though the upper bild is convex, we still do not recover Lancaster theorem in its complex form.

%%%%%%%%%%%%%%%%%%%%%%%%%%%%%%%%%%%%%%%%%%%%%%%%%

\bigskip

\section{Notation and preliminaries}

The division ring of real quaternions $\Hq$, also known as Hamilton quaternions, is an algebra over the field of real numbers with basis $\{1,i,j,k\}$ and product defined by $i^2=j^2=k^2=ijk=-1$. Given a quaternion $q=q_0+q_1i+q_2j+q_3k$, its conjugate is $q^*=q_0-q_1i-q_2j-q_3k$. We call $\re(q)=\frac{q+q^*}{2}$ and $\im(q)=\frac{q-q^*}{2}$ the real and imaginary parts of $q$, respectively. The norm of $q$ is the nonnegative real number $|q|=\sqrt{qq^*}$. Two quaternions $q,q'\in\Hq$ are similar if there is a unitary $u\in\Hq$ such that $u^*qu=q'$, in which case we write $q\sim q'$. This is an equivalence relation and we denote the equivalence class of $q$ by $[q]$.

Let $\mathcal{H}$ denote an infinite dimensional two-sided Hilbert space over $\Hq$. In particular, the norm of $x\in\mathcal{H}$ is defined by the underlying $\Hq$-inner product as $\|x\|=\sqrt{\langle x,x\rangle}$. The inner product verifies the usual Cauchy-Schwartz inequality: $|\langle x,y\rangle|\leq\|x\|\|y\|$, for every $x,y\in\mathcal{H}$. The space of bounded, right $\Hq$-linear operators on $\mathcal{H}$ is denoted by $\mathcal{B}(\mathcal{H})$, its closed ideal of compact operators by $\mathcal{K}(\mathcal{H})$ and the group of invertible operators by $\mathcal{B}(\mathcal{H})^{-1}$.

Every linear operator $T$ considered in the text will be a bounded linear operator in $\mathcal{B}(\mathcal{H})$. Given $q\in\Hq$  and $T \in \mathcal{B}(\mathcal{H})$, we define the operator $\Delta_q(T):\mathcal{H}\to\mathcal{H}$ by
\[
\Delta_q(T)=T^2-2\re(q)T+|q|^2I,
\]
where $I$ is the identity operator. Clearly, $\Delta_q(T)\in\mathcal{B}(\mathcal{H})$. The spherical spectrum of $T$, abbreviated S-spectrum, is the set
\[
\sigma^S(T)=\left\{q\in\Hq:\Delta_q(T)\notin \mathcal{B}(\mathcal{H})^{-1}\right\},
\]
which seems to be the appropriate notion for spectral analysis of linear operators on infinite dimensional quaternionic Hilbert spaces, see \cite{CGSS}.

Let $\pi:\B(\Hi)\rightarrow \B(\Hi)/\K(\Hi)$ denote the canonical quotient map and $\Ca(\Hi)=\B(\Hi)/\K(\Hi)$ the Calkin algebra.
Let $\pi(T)=[T]$ denote the equivalence class $T+\K(\Hi)$, for $T\in \B(\Hi)$. Then $\Ca(\Hi)$ is a normed algebra with $\lVert [T]\rVert=\inf_{K\in\K(\Hi)}\lVert T+K\rVert \leq \lVert T\rVert$. We say that $T$ is a Fredholm operator if the class $[T]$ is invertible in $\Ca(\Hi)$.
%(see \cite[Definition 5.14]{Dou}).
According to Atkinson Theorem,
%(see \cite[Theorem 5.17]{Dou}),
$T\in \B(\Hi)$ is a Fredholm operator if and only if its range is closed and the kernels $\ker(T)$ and $\ker(T^*)$ are finite dimensional, where $T^*\in\mathcal{B}(\mathcal{H})$ is the adjoint of $T$.
The set of all Fredholm operators in $\B(\Hi)$ is denoted by $\Fed(\Hi)$.
%The index of $T\in \Fed(\Hi)$ is the integer $\ind(T)=\Dim(\ker(T))-\Dim(\ker(T^*))$. We let $\Fed_0(\Hi)$ be the set of all Fredholm operators with index $0$.

The essential S-spectrum of $T\in \B(\Hi)$, defined by
\begin{equation*} \label{eq01}
\sigma_{e}^S(T)=\left\{ q \in \Hq : \Delta_q(T) \notin \Fed(\Hi) \right\},
\end{equation*}
%i.e., $\sigma_{e}^S(T)=\sigma^S([T])$, which means that $q \in \Hq$ is in the essential S-spectrum of $T$ if and only if the element $ [\Delta_q(T)]$
%is not invertible in the Calkin algebra.
%(see \cite{Ber}).
is a non-empty compact subset of $\sigma^S(T)$, see \cite{MT}.

%Lying between the essential S-spectrum and the S-spectrum is the Weyl S-spectrum, $\sigma_w^S(T)$, which can be defined as the largest part of the S-spectrum that is invariant under compact perturbations
%%\begin{equation*}
%%\sigma_w^S(T)=\{ \lambda \in \Hq;\quad \Delta_q(T) \notin \Fed_0(\Hi)\}.
%%\end{equation*}
%%which is also given by
%\begin{equation}  \label{Eq05}
%\sigma_w^S(T)=\bigcap_{K\in \K(\Hi)}\sigma^S(T+K),
%\end{equation}
%see Definition 6.1 and Proposition 6.8 in \cite{MT}.
%%Since $\sigma_e^S(T+K)=\sigma_e^S(T)$, for every compact operator $K, $
%%Notice that when $\Hi$ is finite dimensional then $\sigma_w^S(T)=\emptyset$ and $\sigma_e^S(T)=\emptyset$.
%%Since every invertible operator is a Fredholm operator with index $0$, we have $\B(\Hi)^{-1}\subseteq \Fed_0(\Hi)\subseteq \Fed(\Hi)$. As mentioned above,
%We have the following inclusions
%\begin{equation}  \label{Eq04}
%\sigma_{e}^S(T)\subseteq \sigma_w^S(T)\subseteq \sigma^S(T).
%\end{equation}
%%Schechter proved (see \cite[Theorem 2.5]{Ber}) that,
%It follows from \eqref{Eq04} and \eqref{Eq05} that $\sigma_w^S(T)$ is a non-empty compact subset of $\sigma^S(T)$.

In the sequel, we will be working in the quaternion setting, that is, the quaternions $\Hq$ are our ground field (skewfield to be more precise). Therefore,  when we write $W(T)$ or $W_e(T)$, we always refer to the quaternionic numerical range or quaternionic essential numerical range.
%In other words, $W(T)=W_{\Hq}(T)$ and $W_e(T)=W_{e,\Hq}(T)$.

Finally, define the essential bild and the essentials upper and lower bilds to be, respectively, $B_e(T)=W_e(T)\cap\C$, $B_e^+(T)=W_e(T)\cap\C^+$,  and $B_e^-(T)=W_e(T)\cap\C^-$, where $\C^{\pm}$ is the closure of the respective half-planes.

%Similar to the complex case (see \cite[Theorem 9]{SW}), we can define the quaternionic essential numerical range of $T\in\B(\Hi)$, denoted by $W_e(T)$, as the set
%\begin{eqnarray*}
%% \nonumber to remove numbering (before each equation)
%  W_e(T) &:=& \bigcap_{K\in\K(\Hi)}\overline{W(T+K)} \\
% &=& \{q\in\Hq: q\in\overline{W(T+K)}, \, \text{for every}\, K\in\K(\Hi)\}.
%\end{eqnarray*}
%Taking $K=0$ in this intersection, we have $W_e(T)\subseteq \overline{W(T)}$.
%In fact, if $q\in W_e(T)$, then $q\in\overline{W(T+K)}$, for every\, $K\in\K(\Hi)$, in particular for $K=0$. (Também vem trivialmente do Teorema 1.5)

\bigskip

%%%%%%%%%%%%%%%%%%%%%%%%%%%%%%%%%%%%%%%%%%%%%%%%%%%%%%%%%%%%%%%%%%%%%%%%%%%%%%%%%%%%%%%%%%%%%%%%%%
%%%%%%%%%%%%%%%%%%%%%%%%%%%%%%%%%%%%%%%%%%%%%%%%%%%%%%%%%%%%%%%%%%%%%%%%%%%%%%%%%%%%%%%%%%%%%%%%%%
%%%%%%%%%%%%%%%%%%%%%%%%%%%%%%%%%%%%%%%%%%%%%%%%%%%%%%%%%%%%%%%%%%%%%%%%%%%%%%%%%%%%%%%%%%%%%%%%%%
\section{Properties of the essential  numerical range}
%\section{Conditions for $W_e(T)$}

This section is devoted to elementary properties of the essential numerical range and to prove some criteria for a quaternion to be in the essential numerical range of an operator. The results and their proofs are identical to the complex case with some adjustments.  For the sake of completeness full proofs are provided. We start with an auxiliary result concerning compact operators.

\begin{lemma}\label{lem compact}
An operator $T$ is compact if and only if $\langle Te_n,e_n\rangle\to 0$ for every orthonormal set $(e_n)_{n}$.
\end{lemma}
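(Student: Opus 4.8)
My plan is to prove the two implications separately, the forward one being routine and the converse carrying the content. For necessity, I would first note that an orthonormal sequence $(e_n)_n$ is weakly null: Bessel's inequality $\sum_n|\langle e_n,y\rangle|^2\le\|y\|^2$ gives $\langle e_n,y\rangle\to 0$ for every $y\in\Hi$. Since any bounded operator is weakly continuous ($\langle y,Te_n\rangle=\langle T^*y,e_n\rangle\to 0$), also $Te_n\rightharpoonup 0$; if $\|Te_n\|\not\to 0$, compactness of $T$ would produce a subsequence with $Te_{n_k}\to v\neq 0$ in norm, and then $v$ would also be the weak limit, forcing $v=0$, a contradiction. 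Hence $\|Te_n\|\to 0$ and $|\langle Te_n,e_n\rangle|\le\|Te_n\|\to 0$.

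For sufficiency I would argue by contraposition. Assuming $T$ is not compact, $T$ is not a norm limit of finite-rank operators, so there is $\e_0>0$ such that the restriction of $T$ to $F^\perp$ has norm $>\e_0$ for every finite-dimensional subspace $F$. I would feed this into an inductive construction — at stage $n$ choosing a unit vector $e_{n+1}\in(\mathrm{span}\{e_i,Te_i,T^*e_i:i\le n\})^\perp$ with $\|Te_{n+1}\|>\e_0$ — to obtain an orthonormal sequence $(e_n)_n$ with $\|Te_n\|>\e_0$ and, crucially, $\langle Te_m,e_n\rangle=0$ for all $m\neq n$. Supposing toward a contradiction that $\langle Te_n,e_n\rangle\to 0$, I would put $c_n=\langle e_n,Te_n\rangle$ (so $c_n\to 0$) and $f_n=Te_n-e_nc_n$; then $f_n$ is orthogonal to every $e_m$, hence to $V:=\overline{\mathrm{span}}\{e_n\}$, while $\|f_n\|^2=\|Te_n\|^2-|c_n|^2\ge\e_0^2/2$ for large $n$ and $\langle Te_n,f_n\rangle=\|f_n\|^2$.

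To close the argument I would run a polarization using quaternionic phases. As $e_n\rightharpoonup 0$ and $T$ is weakly continuous, $f_n\rightharpoonup 0$, so after a diagonal extraction the unit vectors $\hat f_n:=f_n/\|f_n\|\in V^\perp$ may be taken asymptotically orthonormal. For each unit quaternion $q$ I would consider $u_n^{(q)}=\tfrac{1}{\sqrt{2}}(e_n+\hat f_nq)$, which are unit vectors and (by the same extraction) asymptotically orthonormal in $n$. Using the standard fact that an asymptotically orthonormal system differs in norm from a genuine orthonormal one by a null sequence — applied to $(u_n^{(q)})_n$ and also to $(\hat f_n)_n$ — the hypothesis forces $\langle Tu_n^{(q)},u_n^{(q)}\rangle\to 0$ and $\langle T\hat f_n,\hat f_n\rangle\to 0$. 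Expanding the first and using $\langle Te_n,e_n\rangle\to 0$, $\langle T\hat f_n,\hat f_n\rangle\to 0$ and $\langle Te_n,\hat f_nq\rangle=\|f_n\|\,q$ leaves $\|f_n\|\,q+q^*\gamma_n\to 0$, where $\gamma_n=\langle T\hat f_n,e_n\rangle$ is bounded. Along a subsequence with $\|f_n\|\to\ell\ge\e_0/\sqrt{2}$ and $\gamma_n\to\gamma$, the choice $q=1$ forces $\gamma=-\ell$ and $q=i$ forces $\gamma=\ell$, so $\ell=0$, contradicting $\|f_n\|\ge\e_0/\sqrt{2}$.

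I expect the main obstacle to be the converse, and within it two steps. First, the inductive construction of an orthonormal sequence on which $T$ is bounded below while all off-diagonal entries $\langle Te_m,e_n\rangle$ ($m\neq n$) vanish: this is exactly where non-compactness is spent, and the resulting block structure is what kills the cross terms. Second, the repeated passage from asymptotically orthonormal families to honestly orthonormal ones, needed so that the hypothesis is applicable. The genuinely quaternionic point is that the real phases $q=\pm1$ recover only $\gamma_n+\gamma_n^*$, so a non-real phase such as $q=i$ is indispensable for pinning down $\gamma_n$ and closing the contradiction.
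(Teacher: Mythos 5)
Your proposal is correct, but it takes a genuinely different route from the paper's on both implications. For necessity the paper fixes the orthonormal set, takes $P_n$ the projection onto $\Span\{e_1,\dots,e_n\}$, and bounds $|\langle Te_{n+1},e_{n+1}\rangle|$ by $\|(I-P_n)T(I-P_n)\|$; your argument via complete continuity ($e_n\rightharpoonup 0$ and $T$ compact imply $\|Te_n\|\to 0$) is cleaner and does not depend on how the range of $T$ sits relative to the span of the $e_n$. For sufficiency the paper argues directly rather than by contradiction: an eight-term quaternionic polarization identity (over the units $\pm1,\pm i,\pm j,\pm k$) produces a unit vector $\rho$ with $\|S\|\leq 16\,|\langle S\rho,\rho\rangle|$ for $S=(I-P_n)T(I-P_n)$, and iterating this yields an orthonormal sequence $(\rho_n)$ along which the hypothesis forces $\|(I-P_n)T(I-P_n)\|\to 0$, exhibiting $T$ as the norm limit of the finite-rank operators $P_nT+TP_n-P_nTP_n$. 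You instead spend non-compactness on an orthonormal sequence with $\|Te_n\|>\e_0$ and all off-diagonal entries annihilated, which lets you get away with only the two phases $q=1$ and $q=i$ in the vectors $\tfrac{1}{\sqrt2}(e_n+\hat f_nq)$; the resulting equations $\ell+\gamma=0$ and $\ell-\gamma=0$ do close the contradiction, and your remark that a non-real phase is indispensable is accurate. The one debt in your route is the fact you invoke twice, that a weakly null sequence of unit vectors admits, after extraction, an orthonormal sequence at vanishing distance — needed so that the hypothesis, stated only for orthonormal sets, applies to $(\hat f_n)$ and $(u_n^{(q)})$. This is standard (a Gram-matrix or gliding-hump perturbation) but should be proved or cited; you could also sidestep it entirely by adding $T^*f_i$, $i\leq n$, to the subspace that $e_{n+1}$ must avoid, which makes the $f_n$ genuinely mutually orthogonal and hence $(\hat f_n)$ and each $(u_n^{(q)})$ honestly orthonormal. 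In exchange, your proof localizes exactly where non-compactness is used, while the paper's is constructive and self-contained.
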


\begin{proof}
Let $T\in\mathcal{B}(\Hi)$ be compact and let $(e_n)_{n}$ be an orthonormal set.
Let $P_n$ be the projection onto $\Span\{e_1,\dots, e_n\}$. Since $T$ is compact, it is the limit of a sequence of finite rank operators, i.e.,  $\lim_{n\rightarrow\infty}\lVert P_nT-T\rVert=0$ (see \cite[Corollary 4.5]{C}). Then
\[
\lim_{n\rightarrow\infty} \lVert (I-P_n)T(I-P_n)\rVert \leq\lim_{n\rightarrow\infty}\lVert T-P_nT\rVert \lVert I-P_n\rVert=0.
\]
Since $(I-P_n)e_{n+1}=e_{n+1}$, and using the Cauchy-Schwatz inequality, we have
\begin{eqnarray*}
% \nonumber to remove numbering (before each equation)
  \lvert \langle T e_{n+1}, e_{n+1}\rangle\rvert &=& \lvert \langle T(I-P_n) e_{n+1}, (I-P_n)e_{n+1}\rangle \rvert\\
  %&=& |\langle (I-P_n)T(I-P_n) e_{n+1},e_{n+1}\rangle| \\
   &\leq &  \lVert (I-P_n)T(I-P_n)\rVert.
\end{eqnarray*}
Hence, $\langle T e_{n}, e_{n}\rangle\to 0$.
%%%%%%%%%%%%%%%%%%%%
%%%%%%%%%%%%%%%%%%%%

For the converse, suppose that $T\in \B(\Hi)$ is such that $\langle T e_{n}, e_{n}\rangle\to 0$, for every orthonormal set $(e_n)_{n}$. From
%\begin{equation}\label{normT}
$\lVert T\rVert=\sup_{\|x\|=\|y\|=1}|\langle Tx, y\rangle|,$
%\end{equation}
 there exist unit vectors $x_1,y_1\in\Hi$ such that
\begin{equation*}%\label{claim}
\lvert \langle Tx_1,y_1\rangle \rvert\geq\frac{\lVert T\rVert }{2}.
\end{equation*}
%In fact, to show this we start by proving that
%
% It is clear that
% \[
% \sup_{\|x\|=\|y\|=1}|\langle Tx, y\rangle|\leq \sup_{\|x\|=\|y\|=1} \|Tx\|=\|T\|.
% \]
% For the other inequality, first recall that $T\notequiv 0$ so we can work on $\Hi\backslash \ker(T)$. Then,
%%since $T\notequiv 0$ so
%\begin{eqnarray*}
%% \nonumber to remove numbering (before each equation)
% \lVert T\rVert &=& \sup_{\lVert x\rVert=1}\lVert Tx\rVert
%  = \sup_{\lVert x\rVert=1} \Bigg|\langle Tx,\frac{Tx}{\|Tx\|} \rangle\Bigg| \leq  \sup_{\lVert x\rVert=\lVert y\rVert=1} |\langle Tx,y\rangle|,
%\end{eqnarray*}
%since $\left\{\langle Tx, \frac{Tx}{\|x\|} \rangle|\, :\, \|x\|=1\right\}\subset \{|\langle Tx, y \rangle|\, :\, \|x\|=\|y\|=1\}$.
%This proves (\ref{normT}) and moreover, we have
%\[
%\frac{\|T\|}{2} < \|T\|=\sup_{\|x\|=\|y\|=1} |\langle Tx,y\rangle|.
%\]
%So, there exist vectors $x_1,y_1\in\bS_{\Hi}$ such that $\frac{\|T\|}{2}\leq |\langle Tx_1,y_1\rangle|$, proving our claim above.

A straightforward computation shows the following  ``polarization identity'', for all $x,y\in\Hi$:
%\begin{equation*}
%\begin{aligned}
% 4\langle Tx,y\rangle = {} &  \langle T(x+y), x+y \rangle-\langle T(x-y), x-y \rangle \\
%&  + \Big(\langle T(x+yi), x+yi \rangle- \langle T(x-yi), x-yi \rangle\Big)i  \\
%& +k \Big(\langle T(x+yk), x+yk \rangle- \langle T(x-yk), x-yk \rangle\Big)  \\
% & + k \Big(\langle T(x+yj), x+yj \rangle- \langle T(x-yj), x-yj \rangle\Big)i .
%\end{aligned}
%\end{equation*}
\begin{equation*}
 4\langle Tx,y\rangle = {} \langle T(x+y), x+y \rangle-\langle T(x-y), x-y \rangle + \Big(\langle T(x+yi), x+yi \rangle- \langle T(x-yi), x-yi \rangle\Big)i
 \end{equation*}
\[
+k \Big(\langle T(x+yk), x+yk \rangle- \langle T(x-yk), x-yk \rangle\Big) + k \Big(\langle T(x+yj), x+yj \rangle- \langle T(x-yj), x-yj \rangle\Big)i .
\]
In particular, it follows that
 \[
%\frac{\|T\|}{2}\leq
|\langle Tx_1,y_1\rangle|\leq \frac{1}{4} \sum_{u\in U}
|\langle Tu,u\rangle|,
\]
where $U=\left\{x_1+\eta y_1 \, : \, \eta= \pm 1, \pm i, \pm j, \pm k\right\}$. More precisely, for some $u_0\in U$ we can write
\begin{eqnarray*}
   % \nonumber to remove numbering (before each equation)
%\frac{\|T\|}{2} \leq
|\langle Tx_1,y_1\rangle| & \leq & \frac{8}{4}
|\langle Tu_0,u_0 \rangle| = 2
 \Bigg| \langle  T\left(\frac{u_0}{\|u_0\|}\right),  \frac{u_0}{\|u_0\|} \rangle\Bigg| \, \|u_0\|^2 \\
& \leq  & 8
 \Bigg| \langle  T\left(\frac{u_0}{\|u_0\|}\right),  \frac{u_0}{\|u_0\|} \rangle\Bigg| ,
% \Leftrightarrow  \frac{\|T\|}{2}  & \leq & 2
% \Bigg| \langle  T\left(\frac{(x+(-1)^\varepsilon y v_l)}{\|x+(-1)^\varepsilon y v_l\|}\right),  \frac{x+(-1)^\varepsilon y v_l)}{\|x+(-1)^\varepsilon y v_l\|} \rangle\Bigg| \, \|x+(-1)^\varepsilon y v_l\|^2.
   \end{eqnarray*}
where in the last inequality we used the fact that $\|u_0\|\leq 2$.
Set $\rho_1=u_0/ \|u_0\|\in \Hi $. Then, $\rho_1$ is a unit vector such that
\begin{eqnarray*}
\frac{\lVert T\rVert}{2} \leq \lvert \langle Tx_1,y_1\rangle\rvert \leq 8 \lvert \langle T\rho_1, \rho_1\rangle \rvert
\Leftrightarrow
\frac{\lVert T\rVert}{16} \leq  \lvert \langle T\rho_1, \rho_1\rangle \rvert.
 \end{eqnarray*}

Now, let $P_1$ be the orthogonal projection onto $\Span\{\rho_1\}$. By applying the above argument to the operator $(I-P_1)T(I-P_1)$, we can find a unit vector $\rho_2$ orthogonal to $\rho_1$ such that
 \begin{eqnarray*}
 % \nonumber to remove numbering (before each equation)
   \frac{\lVert (I-P_1)T(I-P_1)\rVert}{16}
\leq   \lvert \langle T\rho_2, \rho_2\rangle \rvert.
 \end{eqnarray*}
Moreover, a recursive procedure allows us to construct an orthonormal sequence $(\rho_n)_{n}$ such that if $P_n$ is the projection onto the span of $\{\rho_1, \dots, \rho_{n}\}$ then
\[
   \frac{ \lVert (I-P_n)T(I-P_n) \rVert}{16} \leq   \lvert \langle T\rho_{n+1}, \rho_{n+1}\rangle  \rvert.
\]
Since $\rho_n$ is an orthonormal sequence, by assumption, we have $\lim_{n\rightarrow \infty}\langle T\rho_n, \rho_n\rangle=0$, so that
 \[
\lim_{n\rightarrow \infty}  \lVert (I-P_n)T(I-P_n)  \rVert=\lim_{n\rightarrow \infty} l\Vert (P_nT+TP_n-P_nTP_n)-T\rVert=0,
 \]
and thus $T$ is compact (being the limit of the finite rank operators $P_nT+TP_n-P_nTP_n$).
\end{proof}

%Clearly, $W_e(T)\subseteq \overline{W(T)}$. In fact, if $q\in W_e(T)$, then $q\in \overline{W(T+K)}$, for every compact operator $K$. So there exists $q_n\in W(T+K)$ such that $q_n\rightarrow q$. There is $x_n\in\bS_\Hi$ such that $q_n=\langle (T+K)x_n,x_n\rangle$ and therefore, from the previous lemma,
%\[
%\lim_{n\rightarrow\infty}q_n=\lim_{n\rightarrow\infty}\langle Tx_n,x_n\rangle+\lim_{n\rightarrow\infty} \langle K x_n,x_n\rangle
%\]
%\[
%q=\lim_{n\rightarrow\infty}\langle Tx_n,x_n\rangle.
%\]
%Then $q\in \overline{W(T)}$.

\bigskip

Next result, well-known in the complex setting (see \cite[Corollary in page 189]{FSW}), gives necessary and sufficient conditions for an element $q\in\Hq$ to belong to $W_e(T)$, for some operator $T\in\mathcal{B}(\mathcal{H})$. A very important class of unitary vectors regarding the essential numerical range, portrayed bellow in condition $b)$, will be called an essential sequence, see definition \ref{def_ess_seq}.
As usual, we write $x_n\rightharpoonup x$ if a sequence $(x_n)_n$ in $\mathcal{H}$ converges to $x\in\mathcal{H}$ in the weak topology.

\begin{theorem}\label{theoEssNRseq}
Let $q\in \Hq$. The following conditions are equivalent:
\begin{enumerate}[a)]
  \item $q\in W_e(T)$.
  \item There exists a sequence of unit vectors $(x_n)_{n}$ in $\Hi$ such that $x_n\rightharpoonup 0$  and $\langle Tx_n,x_n\rangle\to q$.
  \item There exists an orthonormal sequence $(e_n)_{n}$ in $\Hi$ such that $\langle Te_n,e_n\rangle\to q$.
%\item \textcolor{blue}{There exists an orthonormal basis $(\xi_n)_{n}\subseteq \Hi$ such that $\lim_{n\rightarrow\infty}\langle T\xi_n,\xi_n\rangle=q$.}
\end{enumerate}
\end{theorem}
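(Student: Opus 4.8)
I would prove the cycle of implications $(c)\Rightarrow(b)\Rightarrow(a)\Rightarrow(c)$. The first two are short. For $(c)\Rightarrow(b)$, an orthonormal sequence converges weakly to $0$ by Bessel's inequality, so $(x_n)=(e_n)$ works. For $(b)\Rightarrow(a)$, given $K\in\K(\Hi)$, compactness of $K$ together with $x_n\rightharpoonup 0$ gives $Kx_n\to 0$ in norm, hence $\langle (T+K)x_n,x_n\rangle=\langle Tx_n,x_n\rangle+\langle Kx_n,x_n\rangle\to q$; since each term lies in $W(T+K)$ we get $q\in\overline{W(T+K)}$, and as $K$ was arbitrary, $q\in W_e(T)$. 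The substance is $(a)\Rightarrow(c)$, which I would reduce to the claim: \emph{if $q\in W_e(T)$, then for every finite-rank orthogonal projection $P$ and every $\e>0$ there is a unit vector $x\perp\Ran P$ with $|\langle Tx,x\rangle-q|<\e$}. Granting it, one builds $(e_n)$ by induction, applying the claim at stage $n$ to the projection onto $\Span\{e_1,\dots,e_{n-1}\}$ and to $\e=1/n$ (stage $n=1$ needs only $q\in\overline{W(T)}$, which holds since $W_e(T)\subseteq\overline{W(T)}$); then $(e_n)$ is orthonormal with $\langle Te_n,e_n\rangle\to q$.

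To prove the claim I argue by contradiction, producing a compact $K$ with $q\notin\overline{W(T+K)}$. Suppose $|\langle Tx,x\rangle-q|\ge\e_0>0$ for every unit $x\in N:=\Ran(I-P)$, and put $Q=I-P$. Since $PT$, $TP$, $PTP$ and $P$ all have finite rank, for any real number $c$ the operator $K:=-(PT+TP-PTP)+cP$ is compact and $T+K=QTQ+cP$ acts as $cI$ on $\Ran P$ and as $QTQ$ on $N$. A direct computation — using that $c$ is real, so $\langle cx_1,x_1\rangle=c\|x_1\|^2$ — gives
\[
\overline{W(T+K)}=\bigl\{\,td+(1-t)c:\ t\in[0,1],\ d\in\overline{W(QTQ|_N)}\,\bigr\}.
\]

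Next I pass to the bild. For any operator $S$, replacing a unit vector $x$ by $xv$ with $v$ a unit quaternion shows $W_{\Hq}(S)$ is invariant under $z\mapsto v^{-1}zv$, hence is a union of $2$-spheres $\{z:\re z=a,\ |\im z|=r\}$ centred on the real axis; in particular $z\in\overline{W_{\Hq}(S)}$ exactly when $\hat z:=\re z+|\im z|\,i\in B^+(S):=\overline{W_{\Hq}(S)}\cap\C^+$. Applied to $QTQ|_N$: the hypothesis gives $q\notin\overline{W(QTQ|_N)}$, so $\hat q\notin B:=B^+(QTQ|_N)$, and $B$ is compact and convex since upper bilds are convex. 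Because $B$ is compact, convex, and misses $\hat q$, the set of unit directions $u\in\C$ with $\langle\hat q,u\rangle_{\R}>\sup_{w\in B}\langle w,u\rangle_{\R}$ is open and non-empty, hence contains some $u_0$ with $\re u_0\neq 0$; fix a closed half-plane $H=\{z:\langle z,u_0\rangle_{\R}\le s\}$ with $B\subseteq H$, $\hat q\notin H$, and note that $\re u_0\neq0$ forces $H$ to contain some real number $c$. For this $c$ the identity $\widehat{td+(1-t)c}=t\hat d+(1-t)c$ and the displayed formula give $B^+(T+K)=\ch(B\cup\{c\})\subseteq H$, so $\hat q\notin B^+(T+K)$, hence $q\notin\overline{W(T+K)}$ — contradicting $q\in W_e(T)\subseteq\overline{W(T+K)}$.

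The delicate step, and the one that departs from the classical complex argument, is the choice of the separating line. In the complex setting the compression $QTQ|_N$ already has convex closed numerical range and one separates it from $q$ by any hyperplane; in the quaternionic setting $W_{\Hq}(QTQ|_N)$ is badly non-convex, so one must work with its (convex) upper bild, and — since the quaternionic numerical range of an operator on $\Ran P$ reduces to a single point only when that point is real (the $2$-sphere degenerates), making $cI$ with $c\in\R$ the usable perturbation on $\Ran P$ — the separating line for $\hat q$ and $B$ must be chosen to meet the real axis. That is precisely what openness of the set of separating directions guarantees.
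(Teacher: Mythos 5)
Your proof is correct. The implications (c)$\Rightarrow$(b) and (b)$\Rightarrow$(a) coincide with the paper's. For the substantial implication (a)$\Rightarrow$(c) you and the paper share the same skeleton: a recursive construction of the orthonormal sequence, driven at each stage by a finite-rank perturbation of the form $T+K=cP+(I-P)T(I-P)$ with $c$ a \emph{real} scalar, combined with convexity of the upper bild of the compression. The difference lies in how $c$ is chosen and in the direction of the argument. The paper argues directly: it takes $c=\mu_n$ to be a real number lying \emph{inside} the upper bild of the compression --- such a point exists because the quaternionic numerical range always meets the real axis, \cite[Corollary 3.3]{CDM3} --- so that by convexity the upper bild of the perturbed operator \emph{equals} that of the compression, $q$ remains in its closure, and the next vector is produced with no contradiction needed. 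You argue by contraposition: assuming no admissible vector exists, you strictly separate $\hat q$ from the compact convex set $B$, use openness of the set of separating directions to make the separating line non-horizontal, and place the real constant $c$ in the half-plane containing $B$, so that $B^+(T+K)=\ch(B\cup\{c\})$ still misses $\hat q$ and $q\notin\overline{W(T+K)}$. Your route stays closer to the classical complex argument of \cite{FSW} and avoids invoking $W(T)\cap\R\neq\emptyset$, at the cost of the separation/openness step; the paper's choice of $c$ buys a shorter direct proof. Both arguments correctly isolate the quaternion-specific constraint that the perturbation on $\Ran P$ must be a real multiple of the identity (otherwise its numerical range is a full $2$-sphere), which is exactly where the argument departs from the complex case.
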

\begin{proof}
$b)\Rightarrow a).\,$ Suppose b) holds. To see that $q\in \bigcap_{K\in\K(\Hi)}\overline{W(T+K)}$, we will show that $\langle (T+K)x_n,x_n\rangle\to q$, for every compact operator $K$. At this point we need the following well-known result: if $K$ is compact and $x_n\rightharpoonup x$, then $Kx_n\to Kx$ strongly. In particular, if $x_n\rightharpoonup 0$ then $\|Kx_n\|\to 0$.
It follows that
\[
\langle (T+K)x_n,x_n\rangle=\langle Tx_n,x_n\rangle+\langle Kx_n,x_n\rangle\to q,
\]
since we have $\left|\langle Kx_n,x_n\rangle\right|\leq\|Kx_n\|$, for every $n$.

$c)\Rightarrow b).\,$  The result follows from the fact that $e_n\rightharpoonup 0$ for every orthonormal sequence $(e_n)_n$. %From Bessel's inequality, we have:
%\[
%\sum_{n=1}^{\infty}|\langle e_n,y\rangle|^2\leq\|y\|^2.
%\]
%Since $\|y\|^2<\infty$, the series is convergent and so
%\[
%\lim|\langle e_n,y\rangle|^2=0\Leftrightarrow\lim|\langle e_n,y\rangle|=0
%\]
%for every $y\in\mathcal{H}$, that is, $e_n\stackrel{w}{\rightarrow}0$.

$a)\Rightarrow c)$.
Since $W_e(T)=[B_e(T)]$, it is enough to prove the result for the essential upper bild.
Let $q\in B^+_e(T)$.
%Then there is $q\in {B_{e}^+(T)}$ such that $q'\sim q$.
From
%$W_e(T)\subseteq \overline{W(T)}$ we have
${B_{e}^+(T)}\subseteq \overline{B^+(T)}$, there is a sequence of unit vectors $(\xi_n)_n$ in $\Hi$ such that $\langle T\xi_n,\xi_n\rangle\in B^+(T)$ and $\lim_{n\rightarrow\infty}\langle T\xi_n,\xi_n\rangle=q$.
%Then $\lim_{n\rightarrow\infty}|\langle T\xi_n,\xi_n\rangle-q|=0$.
Take $\xi_{M}$, which we call without loss of generality $\xi_1$, such that
\[
|\langle T\xi_1,\xi_1\rangle-q|\leq \frac{1}{2}.
\]

Let $\Ls_1:=\Span\{\xi_1\}$ and write $\Hi=\Ls_1\oplus\Ls_1^\bot$. Denote $P_1:\Hi\rightarrow\Hi$ the orthogonal projection onto $\Ls_1$.
From \cite[Corollary 3.3]{CDM3} we know that  the quaternionic numerical range of an operator, and therefore its upper bild,  always intersects the real line. So, we can take a real number $\mu_1\in B^+\Big((I-P_1)\restr{T}{\Ls_1^\bot}\Big)\cap\bR$.

Let $F_1$ be the finite rank operator such that $ T+F_1 =\mu_1P_1+(I-P_1)T(I-P_1)$. Then, $F_1$ compact and,
since $q\in B_e^+(T)$, it follows that
\[
q\in \overline{B^+(T+F_1)}=\overline{B^+(\mu_1P_1+(I-P_1)T(I-P_1))}.
\]
However, it is clear that
\begin{eqnarray*}
% \nonumber to remove numbering (before each equation)
& &B^+\left(\mu_1P_1+(I-P_1)T(I-P_1)\right) =\\
   %&=& \{\langle (\mu_1P_1+(I-P_1)T(I-P_1)) x, x \rangle\, :\, x\in \bS_{\Hi}\}\cap \C^+  \\
   &=& \left\{\langle (\mu_1P_1+(I-P_1)T(I-P_1)) (x_1+x_2), x_1+x_2 \rangle\, :\, (x_1,x_2)\in\Omega\right\} \cap \C^+ \\
   %&=& \{\mu_1\|x_1\|^2+\langle (I-P_1)T(I-P_1) x_2, x_2 \rangle\, :\,  (x_1,x_2)\in\Omega\}  \cap \C^+ \\
   &=& \left\{\mu_1\lVert x_1\rVert^2+\lVert x_2\rVert^2\langle (I-P_1)T \frac{x_2}{\|x_2\|},\frac{x_2}{\|x_2\|} \rangle\, :\,  (x_1,x_2)\in\Omega\right\}\cap \C^+ ,
\end{eqnarray*}
where $\Omega=\left\{(x_1,x_2):x_1\in \Ls_1, x_2\in\Ls_1^\bot, \, \lVert x_1\rVert^2+\lVert x_2\rVert^2=1\right\}$.

 Since $\mu_1\in B^+\Big((I-P_1)\restr{T}{\Ls_1^\bot}\Big)\cap \bR$ and $B^+\Big((I-P_1)\restr{T}{\Ls_1^\bot}\Big)$ is convex (see \cite[Corollary 1]{Ye1}) we obtain
 \[
 B^+(\mu_1P_1+(I-P_1)T(I-P_1)) = B^+\Big((I-P_1)T_{|\Ls_1^\bot}\Big).
 \]
Hence, $q\in \overline{B^+\Big((I-P_1)\restr{T}{\Ls_1^\bot}\Big)}$. So there is a unit vector $\xi_2\in \Ls_1^\bot$  such that
\begin{eqnarray*}
% \nonumber to remove numbering (before each equation)
 \lvert \langle (I-P_1)\restr{T}{\Ls_1^\bot}\xi_2,\xi_2\rangle - q\rvert  \leq  \frac{1}{2^2} & \Leftrightarrow & |\langle T \xi_2,\xi_2\rangle - q| \leq  \frac{1}{2^2}.
\end{eqnarray*}

If $\xi_1, \dots, \xi_n$ are orthonormal vectors such that
$|\langle T \xi_n,\xi_n\rangle - q| \leq \frac{1}{2^n},$
we can repeat the above procedure with $\Ls_n:=\Span\{\xi_1, \dots, \xi_n\}$, $P_n$ the orthogonal projection onto $\Ls_n$, $\mu_n\in B^+\Big((I-P_n)\restr{T}{\Ls_n^\bot}\Big)\cap\bR$ and $F_n$ such that $T+F_n=\mu_nP_n+(I-P_n)T(I-P_n)$. We thus obtain a unit vector $\xi_{n+1}$ orthogonal to each $\xi_k$ for $1\leq k\leq n$ such that
\[
|\langle T \xi_{n+1},\xi_{n+1}\rangle - q| \leq \frac{1}{2^{n+1}}.
\]
By recursion, there exists an orthonormal sequence $(\xi_n)_{n}$ in $\Hi$ such that $\langle T\xi_n,\xi_n\rangle\to q$.

%Since $q\sim q'$, there is a unit quaternion $u$ such that $q'=u^*qu$. Then
%\[
%q'=u^*\lim_{n\rightarrow\infty} \langle T\xi_n,\xi_n\rangle u = \lim_{n\rightarrow\infty} \langle T\xi_n u,\xi_nu\rangle
%\]
%So we conclude that there is an orthonormal sequence $(e_n)_{n\geq 1}=(\xi_nu)_{n\geq 1}$ such that $\lim_{n\rightarrow\infty} \langle Te_n,e_n\rangle=q'$

%\textcolor{blue}{Falta mostrar (iv). SEGUIR [Bakic,Guljas, Which operators approximately annihilate orthonormal bases?]}
\end{proof}

We will call any sequence satisfying b) an essential sequence for $q$, as stated in the following definition.

\begin{definition}\label{def_ess_seq}
An essential sequence $(x_n)_{n}\subset \Hi$ for $q$ is a sequence of unit vectors such that $x_n\rightharpoonup 0$  and $\langle Tx_n,x_n\rangle\to q$. %The set of all essential sequences is denoted by $\mathcal{S}(q)$.
\end{definition}

\bigskip
An immediate consequence of theorem \ref{theoEssNRseq} is the non-emptiness of the essential numerical range. In fact, for any orthonormal sequence $(e_n)_{n}$, the sequence $\Big(\langle Te_n,e_n\rangle\Big)_{n}$ is bounded by $\|T\|$. Then, it  has a convergent subsequence.
By c) in theorem \ref{theoEssNRseq} we have that $W_e(T)$ is non-empty. Moreover, it is clear that $W_e(T)$ is a compact set since it is closed and bounded in $\mathbb{H}$. These properties are summarized in the corollary below.

\medskip

\begin{corollary}
%Let $T\in \mathcal{B}(\Hi)$. Then
$W_e(T)$ is a non-empty and compact set.
\end{corollary}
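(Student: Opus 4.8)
The plan is to prove the two assertions separately, each as a quick consequence of Theorem \ref{theoEssNRseq} together with the Heine--Borel property of $\Hq\cong\R^4$.

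For non-emptiness, I would first use that $\Hi$ is infinite dimensional to fix an orthonormal sequence $(e_n)_n$. By the Cauchy--Schwarz inequality $|\langle Te_n,e_n\rangle|\le\|T\|$ for every $n$, so $(\langle Te_n,e_n\rangle)_n$ is a bounded sequence in $\Hq$; since closed balls of $\Hq$ are compact, it has a subsequence $(\langle Te_{n_k},e_{n_k}\rangle)_k$ converging to some $q\in\Hq$. The subsequence $(e_{n_k})_k$ is again orthonormal, so condition c) of Theorem \ref{theoEssNRseq} holds for this $q$, and the implication c)$\Rightarrow$a) established there gives $q\in W_e(T)$.

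For compactness, I would note that $W_e(T)$ is closed because, by its very definition, it is the intersection $\bigcap_{K\in\K(\Hi)}\overline{W(T+K)}$ of closed subsets of $\Hq$. Boundedness follows by taking $K=0$ in that intersection: $W_e(T)\subseteq\overline{W(T)}$, and $W(T)$ is contained in the closed ball of radius $\|T\|$, again by Cauchy--Schwarz, hence so is its closure. A closed and bounded subset of $\Hq\cong\R^4$ is compact, which finishes the proof.

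There is no substantial obstacle here; the only points worth keeping in mind are that the existence of an orthonormal sequence relies on $\dim\Hi=\infty$, and that the equivalence a)$\Leftrightarrow$c) in Theorem \ref{theoEssNRseq} is precisely what certifies that the accumulation point $q$ actually belongs to $W_e(T)$.
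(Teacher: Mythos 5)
Your proof is correct and follows essentially the same route as the paper: extract a convergent subsequence from the bounded sequence $(\langle Te_n,e_n\rangle)_n$ for an orthonormal sequence $(e_n)_n$ and invoke condition c) of Theorem \ref{theoEssNRseq} for non-emptiness, then observe that $W_e(T)$ is closed and bounded in $\Hq$ for compactness. You merely spell out the closedness and boundedness slightly more explicitly than the paper does.
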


The essential numerical range in the quaternionic setting shares many properties with either the complex essential numerical range or the quaternionic numerical range. We collect some of such properties below. The proofs are direct and  for that reason only a short hint is provided.
\bigskip

\begin{proposition}\label{prop properties ess n.r}
%Let $T\in \B(\Hi)$. Then
The following properties of the quaternionic essential numerical range hold.
\begin{enumerate}[(i)]
  \item $W_e(T+K)=W_e(T)$, for all $K\in\K(\Hi)$.
  \item $q\in W_e(T)$ if and only if $[q]\subseteq W_e(T)$.
  \item $W_e(T^*)=W_e(T)$.
  \item $W_e(T)\subseteq \overline{\D(0, \|\pi(T)\|)}$.
  \item If $a,b\in\bR, W_e(aT+bI)=aW_e(T)+b$.
  \item $W_e(T+S)\subseteq W_e(T)+W_e(S)$.
  \item If $U\in\B(\Hi)$ is unitary, then $W_e(UTU^*)=W_e(T)$.
  \item $W_e(T)$ contains  all eigenvalues of $T$ of infinite multiplicity.
 % \item $T$ is a self-adjoint operator if and only if $W_e(T)\subseteq\bR$.
\end{enumerate}
\end{proposition}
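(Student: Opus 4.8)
The plan is to deduce all eight items from the sequential characterisation in Theorem~\ref{theoEssNRseq} together with elementary stability properties of the compact ideal, mirroring the complex arguments. For (i), note that for fixed $K\in\K(\Hi)$ the operator $K+K'$ runs over all of $\K(\Hi)$ as $K'$ does, so $\bigcap_{K'\in\K(\Hi)}\overline{W(T+K+K')}=\bigcap_{K''\in\K(\Hi)}\overline{W(T+K'')}$, which is precisely $W_e(T)$. Items (v) and (vii) follow the same template: the maps $S\mapsto aS+bI$ (for $a\ne 0$) and $S\mapsto USU^{*}$ are bijections of $\B(\Hi)$ that preserve $\K(\Hi)$, and they act on numerical ranges by $W(aS+bI)=aW(S)+b$ and $W(USU^{*})=W(S)$ (using that $a,b\in\bR$ and that $U^{*}x$ is a unit vector whenever $x$ is); intersecting over all compact perturbations then gives the claimed identities, the degenerate case $a=0$ being immediate. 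For (iv), each $\overline{W(T+K)}$ sits inside the closed ball $\overline{\D(0,\lVert T+K\rVert)}$, and the intersection of these balls, all centred at the origin, is the closed ball of radius $\inf_{K}\lVert T+K\rVert=\lVert\pi(T)\rVert$. For (vi), given an essential sequence $(x_n)_n$ for $q\in W_e(T+S)$, the bounded sequences $\langle Tx_n,x_n\rangle$ and $\langle Sx_n,x_n\rangle$ have a common convergent subsequence indexed by some $(x_{n_k})_k$, which is still a unit weakly null sequence; its two limits lie in $W_e(T)$ and $W_e(S)$ and add up to $q$.

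The distinctly quaternionic items are (ii), (iii) and (viii), all of which hinge on $W_e(T)$ being a union of similarity classes. For (ii), starting from an essential sequence $(x_n)_n$ for $q$ and a unit quaternion $u$, I would check that $(x_nu)_n$ is again a unit weakly null sequence and that $\langle T(x_nu),x_nu\rangle=u^{*}\langle Tx_n,x_n\rangle u\to u^{*}qu$; since every member of $[q]$ arises this way, $q\in W_e(T)$ forces $[q]\subseteq W_e(T)$, and the reverse implication is trivial. For (iii), the identity $\langle T^{*}x,x\rangle=\overline{\langle Tx,x\rangle}$ turns an essential sequence for $q$ relative to $T$ into one for $q^{*}$ relative to $T^{*}$, so $W_e(T^{*})=\{q^{*}:q\in W_e(T)\}$; since $q\sim q^{*}$ and $W_e(T)$ is, by (ii), a union of similarity classes (each of which is invariant under conjugation), this set equals $W_e(T)$. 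For (viii), if $\lambda$ is an eigenvalue of $T$ with infinite dimensional eigenspace, choose an orthonormal sequence $(e_n)_n$ inside it; then $\langle Te_n,e_n\rangle$ is a constant quaternion lying in $[\lambda]$ and $e_n\rightharpoonup 0$, so Theorem~\ref{theoEssNRseq} together with part (ii) yields $[\lambda]\subseteq W_e(T)$, in particular $\lambda\in W_e(T)$.

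None of these steps presents a serious difficulty; the one place that demands care is the quaternionic bookkeeping in (ii), where one must keep track of the fact that right scaling of an essential sequence by $u$ \emph{conjugates} the values $\langle Tx_n,x_n\rangle$ rather than merely multiplying them, while leaving norms and weak convergence untouched. Once this is in hand, the remaining parts are direct transcriptions of their complex counterparts, which is why only short hints are recorded below.
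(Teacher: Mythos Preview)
Your argument is correct throughout, but the emphasis differs from the paper's. The paper derives every item directly from the \emph{definition} $W_e(T)=\bigcap_{K}\overline{W(T+K)}$ together with the corresponding property of the ordinary numerical range: for (ii) it invokes the circularity of $W(T)$ (i.e.\ $q\in W(T)\Leftrightarrow[q]\subseteq W(T)$), for (iii) the identity $W(T^{*})=W(T)$, and for (vi) the inclusion $W(T+S)\subseteq W(T)+W(S)$; it only uses the sequential characterisation in (viii). You instead lean on Theorem~\ref{theoEssNRseq} for (ii), (iii), (vi) and (viii). Both routes are short. Your treatment of (vi) is in fact the more transparent of the two: from the paper's hint one still has to explain why $\bigcap_{K_1,K_2}\bigl(\overline{W(T+K_1)}+\overline{W(S+K_2)}\bigr)\subseteq W_e(T)+W_e(S)$, which is not automatic for intersections of sums; your subsequence extraction sidesteps this. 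Conversely, for (ii) and (iii) the paper's route is marginally slicker, since the needed facts about $W(T)$ are standard and the intersection of closed circular sets is again circular, so no essential sequences are needed. Your cautious phrasing in (viii), that $\langle Te_n,e_n\rangle$ is a constant lying in $[\lambda]$ and then appealing to (ii), is a safe way to handle the inner-product convention.
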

\begin{proof}
$(i)$ follows from $K+\K(\Hi)=\K(\Hi) $, for any $K \in\K(\Hi)$; $(ii)$  results from  $q\in {W(T)}$ if and only if $[q]\subseteq {W(T)}$, for every operator $T$; $(iii)$ is a consequence of $W(T^*)=W(T)$, for every $T \in \B(\Hi)$; the inclusion $W(T)\subseteq \overline{\D(0, \|T\|)}$ implies $(iv)$; $(v)$ holds because $W(aT+bI)=aW(T)+b$, for $a,b\in\bR$; from $\K(\Hi)+\K(\Hi)=\K(\Hi)$ and $W(T+S)\subseteq W(T)+W(S)$ we obtain $(vi)$; $(vii)$ follows from $W(UTU^*)=W(T)$; for $(viii)$ note that the orthonormal set $(e_n)_{n}$ of eigenvectors satisfying $Te_n=e_n q$ is an essential sequence for $q$.
\end{proof}

\medskip

From \cite[Theorem 2.9]{CDM6} we know that $\sigma^S(T+K) \subseteq \overline{W(T+K)}$, for every $K\in \K(\Hi)$. Using the notion of Weyl S-spectrum,  $\sigma_w^S(T):= \bigcap_{K\in \K(\Hi)}\sigma^S(T+K) $, and that $\sigma_{e}^S(T)\subseteq \sigma_w^S(T)\subseteq \sigma^S(T)$ (see Definition 6.1 and Theorem 6.6 in \cite{MT}), we have the following result.
%\begin{equation} \label{Eq07}
%\sigma_{e}^S(T)\subseteq \sigma_{w}^S(T):= \bigcap_{K\in \K(\Hi)}\sigma^S(T+K) \subseteq \bigcap_{K\in \K(\Hi)}\overline{W(T+K)}=W_e(T),
%\end{equation}

%
%Lying between the essential S-spectrum and the S-spectrum is the Weyl S-spectrum, $\sigma_w^S(T)$, which can be defined as the largest part of the S-spectrum that is invariant under compact perturbations
%\begin{equation*}
%\sigma_w^S(T)=\{ \lambda \in \Hq;\quad \Delta_q(T) \notin \Fed_0(\Hi)\}.
%\end{equation*}
%which is also given by
%\begin{equation}  \label{Eq05}
%\sigma_w^S(T)=\bigcap_{K\in \K(\Hi)}\sigma^S(T+K)
%\end{equation}
%
%%Since $\sigma_e^S(T+K)=\sigma_e^S(T)$, for every compact operator $K, $
%%Notice that when $\Hi$ is finite dimensional then $\sigma_w^S(T)=\emptyset$ and $\sigma_e^S(T)=\emptyset$.
%Since every invertible operator is a Fredholm operator with index $0$, we have $\B(\Hi)^{-1}\subseteq \Fed_0(\Hi)\subseteq \Fed(\Hi)$. As mentioned above,
%\begin{equation}  \label{Eq04}
%\sigma_{e}^S(T)\subseteq \sigma_w^S(T)\subseteq \sigma^S(T).
%\end{equation}
%%Schechter proved (see \cite[Theorem 2.5]{Ber}) that,
%
%It follows from \eqref{Eq04} and \eqref{Eq05} that $\sigma_w^S(T)$ is a non-empty compact subset of $\sigma^S(T)$.
%
%
%
%Theorem $2.8$ of \cite{CDM6} asserts that $\sigma_S(T)\subseteq \overline{W(T)}$. We now prove a similar result: $\sigma_e^S(T)$ is a subset of $W_e(T)$. Notice that  $W_e(T)$ is a closed set, being the intersection of closed sets.

\begin{theorem}\label{Theo_Sspecturm_NR}
$\sigma_e^S(T)\subseteq W_e(T)$.
\end{theorem}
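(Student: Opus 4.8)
The plan is to show that every $q\in\sigma_e^S(T)$ admits an essential sequence, so that $q\in W_e(T)$ by theorem \ref{theoEssNRseq}. Since $\sigma_e^S(T)\subseteq\sigma_w^S(T)=\bigcap_{K\in\K(\Hi)}\sigma^S(T+K)$, and since by \cite[Theorem 2.9]{CDM6} we have $\sigma^S(T+K)\subseteq\overline{W(T+K)}$ for every compact $K$, it follows immediately that
\[
\sigma_e^S(T)\subseteq\bigcap_{K\in\K(\Hi)}\sigma^S(T+K)\subseteq\bigcap_{K\in\K(\Hi)}\overline{W(T+K)}=W_e(T).
\]
So the entire proof is really just assembling the inclusions already recalled in the paragraph preceding the theorem; the statement is essentially a corollary of those cited facts.

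First I would explicitly recall the chain $\sigma_e^S(T)\subseteq\sigma_w^S(T)\subseteq\sigma^S(T)$ from \cite{MT} (Definition 6.1 and Theorem 6.6 there), which gives in particular that $q\in\sigma_e^S(T)$ implies $q\in\sigma^S(T+K)$ for \emph{every} $K\in\K(\Hi)$ — this is the only property of the essential S-spectrum that is needed, namely that it is contained in the Weyl S-spectrum, hence stable under compact perturbations. Second, I would invoke \cite[Theorem 2.9]{CDM6}, applied to the operator $T+K$ in place of $T$ (which is legitimate since $T+K\in\B(\Hi)$), to conclude $\sigma^S(T+K)\subseteq\overline{W(T+K)}$. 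Combining the two, for a fixed $q\in\sigma_e^S(T)$ and arbitrary $K\in\K(\Hi)$ we get $q\in\overline{W(T+K)}$; intersecting over all compact $K$ yields $q\in W_e(T)$ by the very definition of the essential numerical range.

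Since the argument is a short chain of containments, there is no real obstacle; the only point requiring a little care is making sure the cited inclusion $\sigma^S(\cdot)\subseteq\overline{W(\cdot)}$ is applied to all perturbations $T+K$ and not merely to $T$ itself, and that the inclusion $\sigma_e^S(T)\subseteq\sigma_w^S(T)$ is quoted in the correct direction. One could alternatively give a self-contained argument by producing an essential sequence directly: if $q\in\sigma_e^S(T)$ then $\Delta_q(T)\notin\Fed(\Hi)$, so by Atkinson's theorem either one of $\ker\Delta_q(T)$, $\ker\Delta_q(T)^*$ is infinite dimensional or $\Ran\Delta_q(T)$ is not closed, and in each case one can extract an orthonormal sequence $(e_n)_n$ with $\|\Delta_q(T)e_n\|\to 0$; from this a singular sequence for $T$ at $q$ can be built, yielding $\langle Te_n,e_n\rangle\to q$ after passing to a subsequence, so that $q\in W_e(T)$ via theorem \ref{theoEssNRseq}. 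However, given that all the needed spectral inclusions are already available in the literature and recalled in the excerpt, the concise proof via $\sigma_w^S$ is the natural one to present.
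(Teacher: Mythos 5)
Your proof is correct and is essentially identical to the paper's own argument: the paper likewise derives the theorem directly from the chain $\sigma_e^S(T)\subseteq\sigma_w^S(T)=\bigcap_{K\in\K(\Hi)}\sigma^S(T+K)\subseteq\bigcap_{K\in\K(\Hi)}\overline{W(T+K)}=W_e(T)$, citing \cite[Theorem 2.9]{CDM6} and \cite{MT}, and offers no further proof beyond assembling these inclusions.
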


\section{Convexity}

\bigskip

%Regarding theorem \ref{theoEssNRseq}, $\omega \in W_e(T)$ if and only if there is a sequence $x_n \in \bS_\Hi\,(n\in\mathbb{N})$, satisfying conditions stated in part $b)$ of theorem \ref{theoEssNRseq}. We call any such sequence an essential sequence of $\omega$, and denote the set of all those essential sequences by $\mathcal{S}(\omega)$.

In this section we establish the main result of the paper which asserts that  the quaternionic essential numerical range is convex. To see this we will show that for any
two elements $\w1, \w2$ in $W_e(T)$, their convex combination can be arbitrarily approximated by elements $\langle Tz, z\rangle$, where $z \in \Hi$ is generated by an essential sequence for $\w1$ and an essential sequence for $\w2$. To construct such elements $z\in\Hi$ we need a preparatory lemma which states a general property enjoyed by a pair of unitary sequences weakly vanishing and a bounded linear operator.

%\newpage

\begin{lemma}\label{lemmasequence}
Let $T\in B(\Hi)$ and $\big(x^{(i)}_n\big)_n$, $i=1,2$, be unitary sequences in $\Hi$ such that $x_n^{(i)}\rightharpoonup 0$. For any $\e>0$ and $N \in \N$, there is $M \in \N$ such that $M\geq N $ and
\[
\big|\inn{x^{(1)}_N}{x^{(2)}_M}\big|\leq \e, \quad \big|\inn{Tx^{(1)}_N}{x^{(2)}_M}\big|\leq \e, \quad\text{and} \quad \big|\inn{T^*x^{(1)}_N}{x^{(2)}_M}\big|\leq \e.
\]
\end{lemma}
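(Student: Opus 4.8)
The plan is to exploit the weak convergence $x^{(i)}_n\rightharpoonup 0$ to make the three inner products small simultaneously, for a \emph{fixed} $N$ chosen first and then a suitable $M\geq N$ chosen afterwards. The key observation is that, once $N$ is fixed, the three vectors $x^{(1)}_N$, $Tx^{(1)}_N$, and $T^*x^{(1)}_N$ are three \emph{fixed} elements of $\Hi$ (here boundedness of $T$ is used only to guarantee $Tx^{(1)}_N, T^*x^{(1)}_N\in\Hi$, no norm estimate is needed). Therefore I would argue as follows.

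First, fix $\e>0$ and $N\in\N$, and set $y_1=x^{(1)}_N$, $y_2=Tx^{(1)}_N$, $y_3=T^*x^{(1)}_N$, all in $\Hi$. Since $x^{(2)}_m\rightharpoonup 0$ as $m\to\infty$, for each of the three fixed vectors $y_\ell$ we have $\inn{y_\ell}{x^{(2)}_m}\to 0$ as $m\to\infty$; equivalently $\inn{x^{(2)}_m}{y_\ell}\to 0$, and since the quaternionic modulus is invariant under conjugation this also gives $\big|\inn{y_\ell}{x^{(2)}_m}\big|\to 0$. (One should be slightly careful here with the handedness conventions of the two-sided quaternionic inner product: weak convergence $x^{(2)}_m\rightharpoonup 0$ means $\inn{x^{(2)}_m}{z}\to 0$ for all $z\in\Hi$, and one uses $\inn{Tx^{(1)}_N}{x^{(2)}_M}=\overline{\inn{x^{(2)}_M}{Tx^{(1)}_N}}$ and $\inn{T^*x^{(1)}_N}{x^{(2)}_M}=\overline{\inn{x^{(2)}_M}{T^*x^{(1)}_N}}=\overline{\inn{Tx^{(2)}_M}{x^{(1)}_N}}$ — or simply apply weak convergence to the three fixed target vectors directly.)

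Next, since there are only finitely many (three) sequences of scalars tending to $0$, I can choose $M_\ell\geq N$ such that $\big|\inn{y_\ell}{x^{(2)}_m}\big|\leq\e$ for all $m\geq M_\ell$, and then take $M=\max\{N,M_1,M_2,M_3\}$. This $M$ satisfies $M\geq N$ and all three required inequalities at once, which proves the lemma.

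The only point requiring a little care — and hence the "main obstacle", though it is a mild one — is the bookkeeping with the two-sided quaternionic inner product and the definition of weak convergence in that setting: one must make sure that $x^{(2)}_m\rightharpoonup 0$ indeed forces $\inn{z}{x^{(2)}_m}\to 0$ (equivalently $|\inn{z}{x^{(2)}_m}|\to 0$) for every fixed $z\in\Hi$, and that conjugation does not spoil the modulus estimates. Beyond that, the proof is essentially the standard Hilbert-space argument: weak-null sequences have inner products with any fixed vector tending to zero, and a finite list of such can be controlled simultaneously by passing far enough out along the index. Note that the asymmetry between $N$ and $M$ in the statement is exactly what makes this work: we are free to pick $M$ after $N$, so no diagonal or simultaneous argument over both indices is needed.
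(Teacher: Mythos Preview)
Your argument is correct, and it is genuinely simpler than the paper's. The paper fixes an orthonormal basis $(e_k)_k$, chooses a finite-rank projection $P_K$ so that $(I-P_K)x^{(1)}_N$, $(I-P_K)Tx^{(1)}_N$, and $(I-P_K)T^*x^{(1)}_N$ all have small norm, and then uses coordinatewise convergence of the weakly null sequence $(x^{(2)}_m)_m$ to make $\|P_K x^{(2)}_M\|$ small; each inner product is then split via $P_K$ and $I-P_K$ and bounded by Cauchy--Schwarz. You bypass all of this by observing that, once $N$ is fixed, the three vectors $x^{(1)}_N$, $Tx^{(1)}_N$, $T^*x^{(1)}_N$ are fixed, and weak convergence $x^{(2)}_m\rightharpoonup 0$ \emph{by definition} forces the inner product of $x^{(2)}_m$ against any fixed vector to tend to zero in modulus; the conjugate-symmetry remark you include handles the sidedness of the quaternionic inner product cleanly. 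Your route is shorter and makes transparent that only the weak nullity of $(x^{(2)}_m)_m$ is actually used (the hypothesis on $(x^{(1)}_n)_n$ plays no role in the lemma itself). The paper's approach, while correct, essentially re-derives the content of weak convergence through finite-dimensional truncation; it buys nothing extra for this lemma, though the explicit $P_K$ decomposition may feel more concrete to some readers.
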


\begin{proof}
%
%For simplicity of notation we will drop, for a moment, the index $p \in\N$ in the parameters in use.
Let $\delta>0$. Let $(e_k)_k$ be an orthonormal basis for $\Hi$ and $P_K$ be the projection onto $\Span\{e_1,\dots, e_K\}$. Since $(I-P_K)y\xrightarrow[K \to \infty]{}0$ for every $y \in \Hi$, then, for the above $\delta>0$ and $N\in\mathbb{N}$, we may find $K \in \N$ such that
\begin{equation}\label{Kcondts}
\|(I-P_K)x^{(1)}_N\|\leq \delta, \quad \|(I-P_K)Tx^{(1)}_N\|\leq \delta \quad\text{ and }\quad \|(I-P_K)T^* x^{(1)}_N\|\leq \delta.
\end{equation}
 We can find an $M\in\mathbb{N}$  that depends on $\delta$, $N$, $K$, such that $M \geq N$ and
\begin{equation}\label{Mcondts}
|\langle x^{(2)}_M, e_k\rangle | \leq \frac{\delta}{2^{k/2}}, \text{ for every } 1 \leq k \leq K.
\end{equation}
Inequality (\ref{Mcondts}) follows  from the fact that $\big(x^{(2)}_n\big)_n$ vanishes weakly, and that implies coordinatewise convergence to zero.
%
%that is, it follows from $\langle x^{(2)}_M , y \rangle \to 0$, for any $y  \in \Hi$.
It follows that
\[
\Big\|\sum_{1\leq k \leq K} \langle x^{(2)}_M, e_k\rangle \;e_k\Big\|^2 = \sum_{1\leq k \leq K} \Big| \langle x^{(2)}_M, e_k\rangle \Big|^2 \leq \delta^2
\]
and therefore,
\begin{equation} \label{convlemma4}
\big\|P_K x^{(2)}_M\big\|\leq \delta.
\end{equation}

Noting that $\Vert x^{(1)}_N\rVert=\lVert x^{(2)}_M\rVert=1$, we have
\begin{align*}
\Big|\langle x^{(1)}_N, x^{(2)}_M\rangle\Big|  &\leq \Big| \langle x^{(1)}_N, (I-P_K)x^{(2)}_M\rangle \Big| + \Big| \langle x^{(1)}_N, P_K x^{(2)}_M\rangle \Big| \\
&\leq\big\| (I-P_K)x^{(1)}_N\big\| \,\,\big\| x^{(2)}_M \big\| + \big\| x^{(1)}_N\big\|\,\,\big\|P_K x^{(2)}_M\big\|\\
&\leq 2\delta \,\,\,\,\,\,\,(\textrm{from}\,\,(\ref{Kcondts})\,\,\text{and}\,\,(\ref{convlemma4})).
\end{align*}

Using a similar reasoning, we can show that
\begin{align*}
    \big|\langle Tx^{(1)}_N, x^{(2)}_M\rangle\big|&\leq \| (I-P_K)Tx^{(1)}_N\|\,\, \|x^{(2)}_M\|+\|Tx^{(1)}_N\| \| P_K x^{(2)}_M\|\\
&\leq \| (I-P_K)Tx^{(1)}_N\|+\|T\| \,\, \delta \\
&\leq \delta+\|T\| \,\, \delta
\end{align*}
and
$\big|\langle T^*x^{(1)}_N, x^{(2)}_M\rangle\big|\leq \delta+\|T\| \,\, \delta$. Letting $\delta$ be such that $\max\{2, 1+\|T\|\}\delta\leq \e$ the lemma follows.
\end{proof}

\begin{theorem}\label{thm convexity}
%Let $T\in\B(\Hi)$. Then
 $ W_e(T)$ is convex.
\end{theorem}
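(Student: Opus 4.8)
The plan is to take two elements $\w1,\w2 \in W_e(T)$ and a convex combination $\alpha^2\w1+\beta^2\w2$ with $\alpha,\beta \geq 0$, $\alpha^2+\beta^2=1$, and produce an essential sequence for it; convexity then follows from theorem \ref{theoEssNRseq}. By that theorem, fix essential sequences $(x^{(1)}_n)_n$ for $\w1$ and $(x^{(2)}_n)_n$ for $\w2$, both weakly null. I would build a new sequence $z_m = \alpha x^{(1)}_{N_m} + \beta x^{(2)}_{M_m}$ by choosing indices recursively: pick $N_m$ large, then invoke lemma \ref{lemmasequence} with $\e = 1/m$ to get $M_m \geq N_m$ so that $x^{(2)}_{M_m}$ is $\e$-quasi-orthogonal to $x^{(1)}_{N_m}$, $Tx^{(1)}_{N_m}$ and $T^*x^{(1)}_{N_m}$. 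To make the construction truly produce a weakly null sequence (and not just control these three inner products) I would in fact also require $N_m$ and $M_m$ to be strictly increasing and, if needed, impose the symmetric quasi-orthogonality $|\inn{x^{(2)}_{M_m}}{x^{(1)}_{N_{m'}}}|$ small for earlier indices as well; but weak nullity of $(z_m)_m$ can also be argued more cheaply, see below.

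The three estimates to verify for $z_m$ are: (a) $\|z_m\|^2 \to 1$; (b) $\inn{Tz_m}{z_m} \to \alpha^2\w1 + \beta^2\w2$; (c) $z_m \rightharpoonup 0$. For (a), expand $\|z_m\|^2 = \alpha^2\|x^{(1)}_{N_m}\|^2 + \beta^2\|x^{(2)}_{M_m}\|^2 + 2\re(\alpha\beta\,\inn{x^{(1)}_{N_m}}{x^{(2)}_{M_m}})$; the first two terms sum to $\alpha^2+\beta^2 = 1$ and the cross term is bounded by $2\alpha\beta/m \to 0$ using lemma \ref{lemmasequence}. (Strictly this gives $\|z_m\| \to 1$, so I would pass to the normalized vectors $z_m/\|z_m\|$ at the end; the inner product $\inn{T(z_m/\|z_m\|)}{z_m/\|z_m\|}$ differs from $\inn{Tz_m}{z_m}$ by a factor $1/\|z_m\|^2 \to 1$, so the limit is unchanged.) For (b), expand
\[
\inn{Tz_m}{z_m} = \alpha^2\inn{Tx^{(1)}_{N_m}}{x^{(1)}_{N_m}} + \beta^2\inn{Tx^{(2)}_{M_m}}{x^{(2)}_{M_m}} + \alpha\beta\,\inn{Tx^{(1)}_{N_m}}{x^{(2)}_{M_m}} + \alpha\beta\,\inn{Tx^{(2)}_{M_m}}{x^{(1)}_{N_m}}.
\]
The first two terms converge to $\alpha^2\w1$ and $\beta^2\w2$ by the defining property of the essential sequences (after choosing $N_m \to \infty$, and $M_m \geq N_m \to \infty$). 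The third cross term is $\leq \alpha\beta/m$ by the second estimate of lemma \ref{lemmasequence}. The fourth cross term equals $\alpha\beta\,\overline{\inn{T^*x^{(1)}_{N_m}}{x^{(2)}_{M_m}}}$ (using $\inn{Tx^{(2)}_{M_m}}{x^{(1)}_{N_m}} = \inn{x^{(2)}_{M_m}}{T^*x^{(1)}_{N_m}} = \overline{\inn{T^*x^{(1)}_{N_m}}{x^{(2)}_{M_m}}}$), hence $\leq \alpha\beta/m$ by the third estimate. So $\inn{Tz_m}{z_m} \to \alpha^2\w1+\beta^2\w2$.

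The main obstacle is (c), weak nullity of $(z_m)_m$. One clean way: since $(x^{(1)}_n)_n$ and $(x^{(2)}_n)_n$ are each weakly null and $N_m, M_m \to \infty$, for any fixed $y \in \Hi$ we have $\inn{x^{(1)}_{N_m}}{y} \to 0$ and $\inn{x^{(2)}_{M_m}}{y} \to 0$, hence $\inn{z_m}{y} = \alpha\inn{x^{(1)}_{N_m}}{y} + \beta\inn{x^{(2)}_{M_m}}{y} \to 0$; that is, $z_m \rightharpoonup 0$ directly, with no extra bookkeeping, provided only that the index sequences tend to infinity (which I arrange by taking $N_m \geq m$ and then $M_m \geq N_m$). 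Thus the sequence $(z_m/\|z_m\|)_m$ is an essential sequence for $\alpha^2\w1+\beta^2\w2$, so $\alpha^2\w1+\beta^2\w2 \in W_e(T)$ by theorem \ref{theoEssNRseq}, and since every convex combination $t\w1+(1-t)\w2$, $t\in[0,1]$, has this form with $\alpha^2 = t$, $\beta^2 = 1-t$, the set $W_e(T)$ is convex. A minor point to be careful about in the quaternionic setting is the noncommutativity in expanding $\inn{T(\alpha u + \beta v)}{\alpha u + \beta v}$: since $\alpha, \beta$ are real scalars they commute with everything and factor out as $\alpha^2, \beta^2, \alpha\beta$ cleanly, so the usual bilinear expansion is valid here — this is exactly why one parametrizes the convex combination with real $\alpha^2, \beta^2$ rather than general quaternionic coefficients.
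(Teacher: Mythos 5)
Your proposal is correct and follows essentially the same route as the paper: the same splitting $z=\alpha x^{(1)}_{N}+\beta x^{(2)}_{M}$ with indices chosen via lemma \ref{lemmasequence}, the same treatment of the cross terms (including the identity $\inn{Tx^{(2)}_{M}}{x^{(1)}_{N}}=\overline{\inn{T^*x^{(1)}_{N}}{x^{(2)}_{M}}}$, which the paper uses in the form $\alpha\beta\big|\inn{Tx^{(1)}_N}{x^{(2)}_M}+\overline{\inn{T^*x^{(1)}_N}{x^{(2)}_M}}\big|$), the same normalization $z/\|z\|$, the same direct argument for weak nullity from $N_m,M_m\to\infty$, and the same conclusion via theorem \ref{theoEssNRseq}. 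Your extra worry about symmetric quasi-orthogonality for earlier indices is, as you yourself note, unnecessary, and the paper likewise dispenses with it.
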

\begin{proof}

Convexity of $W_e(T)$ will be proved by showing that $\alpha^2\w1+\beta^2\w2 \in W_e(T)$ for any $\alpha^2+\beta^2=1$, when $\w1, \w2$ lie in $W_e(T)$. For that we will prove there is an essential sequence $(\tilde z_p)_p$ for $\alpha^2\w1+\beta^2\w2$.

 Let $\big(x^{(i)}_n\big)_n$ be an essential sequence for $\omega^{(i)}$ and denote $\omega_n^{(i)}=\langle Tx^{(i)}_n, x^{(i)}_n\rangle$, for $i=1,2$. For any $p\in \N$ let $\e=1/p$. One of the conditions for the sequence $\big(x^{(i)}_n\big)_n$ to be essential for $\w{i}$ is that $\w{i}_n \to \w{i}$ when $n \to \infty$. Hence, for the given $\e$, there exists  $N\geq p$ satisfying
\begin{equation}\label{Ncondts}
|\w1_n-\w1|\leq \e \text{ and  } |\w2_n-\w2|\leq \e, \; \text{ for } n \geq N.
\end{equation}

Pick $M$  according to the previous  lemma. For the fixed $\alpha$ and $\beta$, let $z=\alpha x^{(1)}_N+\beta x^{(2)}_M$. Since $\alpha^2+\beta^2=1$ and $\alpha \beta \leq \frac{1}{2}$, we  easily  verify that
\begin{equation}\label{convlemma1}\\
\big|\|z\|^2-1\big|\leq \big|\langle x^{(1)}_N, x^{(2)}_M\rangle \big|\leq \e.
\end{equation}
A simple computation shows that
 \begin{align*}
 \Big|\langle T z, z\rangle - \big(\alpha^2\w1_N+\beta^2\w2_M\big)\Big| &= \alpha\beta\Big|\langle Tx^{(1)}_N, x^{(2)}_M\rangle + \overline{\langle T^*x^{(1)}_N, x^{(2)}_M\rangle} \Big| \leq \e.\nonumber
 %&\label{convlemma2}%\label{convlemma5}
 \end{align*}
From (\ref{Ncondts}), it follows that
 \begin{align}
 \Big|\langle T z, z\rangle - \big(\alpha^2\w1+\beta^2\w2\big)\Big|&\leq \Big|  \big(\alpha^2\w1_N+\beta^2\w2_M\big)-\big(\alpha^2\w1+\beta^2\w2\big) \Big|\nonumber\\
 & \,\,\,\,\,\,\,\,\,\,+ \Big|\langle T z, z\rangle - \big(\alpha^2\w1_N+\beta^2\w2_M\big)\Big|\nonumber\\
&\leq \alpha^2 \Big|\w1_N - \w1\big|+ \beta^2 \Big|\w2_M - \w2\big| +\Big|\langle T z, z\rangle - \big(\alpha^2\w1_N+\beta^2\w2_M\big)\Big| \nonumber\\
&\leq  2\e.\label{convlemma2}
  \end{align}

Observing that the fixed integers $N$ and $M$ depend on $\e$, that is on $p\in\N$, we denote them by $N_p$ and $M_p$; likewise, we denote $z$ by $z_p$.  To get an essential sequence we have to normalize $(z_p)_p$. Write $\tilde z_p=\frac{z_p}{\|z_p\|}$. From (\ref{convlemma1}), $\|z_p\|\to 1\,\,(p\to\infty)$, and so $(\tilde z_p)_p$ is well defined.
By definition, $z_p=\alpha x^{(1)}_{N_p}+\beta x^{(2)}_{M_p}$, and $x^{(1)}_{N_p}, x^{(2)}_{M_p} \rightharpoonup 0$, when $p \to \infty$. By linearity and since $\|z_p\|\to 1$, we have that $\tilde z_p \rightharpoonup 0$. Finally, from (\ref{convlemma2}) it follows that
\[
 \langle T \tilde z_p, \tilde z_p\rangle = \frac{1}{\|z_p\|^2} \langle T z_p, z_p\rangle \to \alpha^2\w1+\beta^2\w2.
 \]
The sequence $(\tilde z_p)_p$ is essential for $\alpha^2\w1+\beta^2\w2$ and thus, by theorem \ref{theoEssNRseq}, $\alpha^2\w1+\beta^2\w2\in W_e(T).$
\end{proof}

\bigskip

Next result establishes the relation between the boundary of the numerical range and the essential numerical range. This is the quaternionic analogue of Lancaster's theorem for the complex numerical range, see \cite[Theorem 1]{L}. Since the quaternionic numerical range is not always convex, a modification is imposed and we need to introduce the notion of inter-convex hull of sets (see \cite[Definition 3.2]{CDM2}).

 The inter-convex hull of the sets $A$ and $B$, denoted by $\ich\{A, B\}$, closes the set $A \cup B$ to the convex combinations with one element of each sets,
\begin{equation}\label{intraconvex hull}
 \ich\{A, B\}=\{\alpha a+(1-\alpha) b: a \in A, \, b \in B, \, 0\leq \alpha \leq 1\}.
\end{equation}

\begin{theorem}\label{thm lancaster quaternion}

The closure of the numerical range is $\overline{W(T)}=\ich\{W_e(T), W(T)\}$.
\end{theorem}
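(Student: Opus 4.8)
The plan is to prove the two inclusions separately. The inclusion $\ich\{W_e(T), W(T)\} \subseteq \overline{W(T)}$ should be the easy direction: since $W_e(T) \subseteq \overline{W(T)}$ (take $K=0$ in the definition) and trivially $W(T) \subseteq \overline{W(T)}$, it remains to show $\overline{W(T)}$ is closed under inter-convex combinations $\alpha a + (1-\alpha)b$ with $a \in W_e(T)$, $b \in W(T)$. Here is where I would reuse the machinery of Lemma \ref{lemmasequence} and the proof of Theorem \ref{thm convexity}: given $a \in W_e(T)$ there is an essential sequence $(x_n)_n$ for $a$ (so $x_n \rightharpoonup 0$), and given $b \in W(T)$ there is a unit vector $y$ with $\langle Ty, y\rangle = b$. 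Set $\e = 1/p$; because $x_n \rightharpoonup 0$ and $y$ is fixed, for large $N$ the inner products $\langle x_N, y\rangle$, $\langle Tx_N, y\rangle$, $\langle T^*x_N, y\rangle$ are all at most $\e$ (this is even simpler than Lemma \ref{lemmasequence} since $y$ does not move). Then $z_p = \alpha x_{N_p} + \sqrt{1-\alpha}\, y$... — actually I should be careful about the parametrization: to hit $\alpha a + (1-\alpha) b$ I take coefficients $\sqrt{\alpha}$ and $\sqrt{1-\alpha}$, exactly as in Theorem \ref{thm convexity} with $\alpha^2 \rightsquigarrow \alpha$, $\beta^2 \rightsquigarrow 1-\alpha$. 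The same estimates (\ref{convlemma1})–(\ref{convlemma2}) give $\langle T\tilde z_p, \tilde z_p\rangle \to \alpha a + (1-\alpha)b$, so this combination lies in $\overline{W(T)}$.

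For the reverse inclusion $\overline{W(T)} \subseteq \ich\{W_e(T), W(T)\}$, I would first note that $\ich\{W_e(T), W(T)\}$ contains $W(T)$ (take $\alpha = 0$; this uses $W_e(T) \neq \emptyset$, which is the Corollary after Theorem \ref{theoEssNRseq}). So it suffices to show that every limit point of $W(T)$ that is not already in $W(T)$ lies in $\ich\{W_e(T), W(T)\}$. Let $q = \lim_n \langle T\xi_n, \xi_n\rangle$ with $\|\xi_n\| = 1$. Passing to a subsequence, either $(\xi_n)_n$ has a weakly convergent subsequence with weak limit $x$, or... in a Hilbert space the unit ball is weakly sequentially compact, so we may assume $\xi_n \rightharpoonup x$ for some $x$ with $\|x\| \leq 1$. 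Write $\|x\| = t \in [0,1]$. If $t = 0$ then $\xi_n \rightharpoonup 0$ and, after a further subsequence making $\langle T\xi_n,\xi_n\rangle$ converge (it does, to $q$), $(\xi_n)$ is an essential sequence for $q$, so $q \in W_e(T) \subseteq \ich\{W_e(T), W(T)\}$. If $t = 1$ then $\xi_n \to x$ strongly (weak convergence plus norm convergence in Hilbert space), so $q = \langle Tx, x\rangle \in W(T)$. The interesting case is $0 < t < 1$: write $\xi_n = x + \eta_n$ where $\eta_n = \xi_n - x \rightharpoonup 0$ and $\|\eta_n\|^2 = 1 - 2\re\langle \xi_n, x\rangle + \|x\|^2 \to 1 - 2t^2 + t^2 = 1 - t^2$. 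The plan is to expand $\langle T\xi_n, \xi_n\rangle = \langle Tx,x\rangle + \langle Tx, \eta_n\rangle + \langle T\eta_n, x\rangle + \langle T\eta_n, \eta_n\rangle$. The cross terms $\langle Tx, \eta_n\rangle$ and $\langle T\eta_n, x\rangle = \overline{\langle T^*x, \eta_n\rangle}$ tend to $0$ because $\eta_n \rightharpoonup 0$. So $q = \langle Tx, x\rangle + \lim_n \langle T\eta_n, \eta_n\rangle$. Now $\langle Tx, x\rangle = t^2 \langle T(x/t), x/t\rangle \in t^2 W(T)$, and passing to a subsequence so that $\langle T\eta_n, \eta_n\rangle$ converges, say to $c$, the normalized sequence $\eta_n / \|\eta_n\|$ is an essential sequence (it weakly vanishes, and $\langle T(\eta_n/\|\eta_n\|), \eta_n/\|\eta_n\|\rangle \to c/(1-t^2)$), so $c/(1-t^2) \in W_e(T)$, i.e. $\langle T\eta_n,\eta_n\rangle \to c \in (1-t^2) W_e(T)$. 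Therefore $q = t^2 w + (1-t^2) w_e$ with $w \in W(T)$, $w_e \in W_e(T)$, and $t^2 \in (0,1)$, giving $q \in \ich\{W_e(T), W(T)\}$.

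The main obstacle I anticipate is the decomposition step in the $0 < t < 1$ case: I need the normalized weak-null part $\eta_n/\|\eta_n\|$ to genuinely be an essential sequence, which requires $\|\eta_n\| \to \sqrt{1-t^2} > 0$ (so the normalization is well-defined for large $n$) and requires controlling that the limit of $\langle T\eta_n, \eta_n\rangle$ actually exists along a subsequence — both handled by passing to subsequences, but one must check that $q$, being the limit of the original sequence, is unchanged along subsequences. A subtle point: I must also confirm that $x \neq 0$ so that $\langle Tx, x\rangle/\|x\|^2$ makes sense as an element of $W(T)$; this is exactly the hypothesis $t > 0$ of this case. One should also double-check the edge behavior: when $t^2 = 0$ or $t^2 = 1$ the convex parameter degenerates, and these are precisely the two boundary cases dispatched above, so the three cases together cover $t \in [0,1]$ with no gap. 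A final check is that $\ich\{W_e(T), W(T)\}$ as produced is contained in $\overline{W(T)}$, which closes the loop with the first inclusion and shows the right-hand side is closed (hence equals $\overline{W(T)}$ rather than something smaller). I would also remark, following the introduction, that unlike the complex case one cannot conclude $W(T)$ is closed when $W_e(T) \subseteq W(T)$, precisely because $\ich$ is weaker than $\conv$.
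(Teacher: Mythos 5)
Your proposal is correct and follows essentially the same route as the paper: the forward inclusion by forming $z_n=\alpha y+\beta y_n$ from a representing unit vector and an essential sequence (with cross terms vanishing by weak nullity), and the reverse inclusion by extracting a weak limit $x$ of a realizing sequence and splitting into the cases $\|x\|=0$, $\|x\|=1$, and $0<\|x\|<1$ with the decomposition $q=\|x\|^2\,\omega+(1-\|x\|^2)\,\omega_e$. The subtleties you flag (normalizability of $\eta_n/\|\eta_n\|$, passing to subsequences, the degenerate endpoints) are exactly the points the paper's argument handles, so no gap remains.
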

\begin{proof}
We start proving that $\ich\{W_e(T), W(T)\} \subseteq\overline{W(T)}$. Let $\bar \omega \in \ich\{W_e(T), W(T)\}$. Then $\bar{\omega}= \alpha^2 \omega+\beta^2 \omega_e$ with $\omega \in W(T), \omega_e \in W_e(T)$ and $\alpha^2+\beta^2=1$.  In particular, we can take a unitary $y \in \Hi$ such that $\omega=\inn{Ty}{y}$ and an essential sequence $(y_n)_n$ for $\omega_e$. Since $y_n \rightharpoonup 0$, we have that $\lim \,\inn{y_n}{y}=\lim \, \inn{y_n}{Ty}=\lim\, \inn{y_n}{T^*y}=0$. Let $z_n=\alpha y+\beta y_n$. Then,
\[
\innt{z_n}=\alpha^2 \innt{y}+\beta^2\innt{y_n} +\alpha\beta\Big(\inn{Ty}{y_n}+\inn{Ty_n}{y}\Big) \to \alpha^2 w+\beta^2w_e=\bar{\omega}.
\]
Furthermore,
\[\|z_n\|^2=\alpha^2\|y\|^2 +\beta^2\|y_n\|^2 +\alpha\beta\Big(\inn{y}{y_n}+\inn{y_n}{y}\Big)\to 1.
\]
Thus $W(T) \ni \innt{\frac{z_n}{\|z_n\|}} \to \bar\omega$, and $\bar\omega \in \overline{W(T)}$.

To prove the converse inclusion, take $\overline\omega \in \overline{W(T)}$. There is a sequence $\left(y_n\right)_n$ in $\Hi$ satisfying $\|y_n\|=1$ and $\omega_n=\langle T y_n, y_n\rangle \to \overline\omega $. Since this sequence is in the unit circle, there is an element $y \in \Hi$ in the unit disk such that $y_n$ converges weakly to $y$.
%, that is  $y_n \rightharpoonup y$.

If $y=0$, then $(y_n)_n$ is an essential sequence for $\overline\omega$. From theorem \ref{theoEssNRseq} we have $\overline\omega \in W_e(T)$.

If $\|y\|=1$, we have that $y_n\rightharpoonup y$, with $\|y\|=1=\|y_n\|$. It is well-known that in this case $y_n \rightarrow y$ (strongly). Thus, $\innt{y_n} \to \innt{y}$, that is, $ \overline\omega =  \innt{y} \in W(T)$.

Assume now that $\|y\| \neq 0,1$. Using that $\langle y_n, h\rangle \to \langle y, h\rangle$ for any $ h \in \Hi$, we can prove that $\lim \;\langle Ty_n, y\rangle=\lim \;\langle Ty, y_n\rangle= \langle Ty, y\rangle$ and therefore
\[
\lim \; \langle Ty_n, y_n\rangle =\lim \;\left[ \langle Ty, y\rangle + \langle T(y_n-y), y_n-y\rangle\right].
\]
It is easy to see that $\lim \|y_n-y\|^2= 1-\|y\|^2$. Then
\begin{align*}
\overline\omega=\lim\;\langle Ty_n, y_n\rangle= & \lim \left[\|y\|^2 \big\langle T\frac{y}{\|y\|}, \frac{y}{\|y\|}\big\rangle + \|y_n-y\|^2\Big\langle  T\frac{y_n-y}{\|y_n-y\|}, \frac{y_n-y}{\|y_n-y\|}\Big\rangle\right]\\
=& \|y\|^2 \big\langle T\frac{y}{\|y\|}, \frac{y}{\|y\|}\big\rangle + (1-\|y\|^2)\lim \Big\langle  T\frac{y_n-y}{\|y_n-y\|}, \frac{y_n-y}{\|y_n-y\|}\Big\rangle .
\end{align*}
We have just written $\overline\omega$ as a convex combination of $\omega=\langle T\frac{y}{\|y\|}, \frac{y}{\|y\|}\big\rangle \in W(T)$ and $\omega_e= \lim \;\langle  T\frac{y_n-y}{\|y_n-y\|}, \frac{y_n-y}{\|y_n-y\|}\Big\rangle$. We use theorem \ref{theoEssNRseq} again, observing that $\left(\frac{y_n-y}{\|y_n-y\|}\right)_n$ is an essential sequence for $w_e$, to conclude that $w_e\in W_e(T)$.  Therefore, $\overline\omega \in \ich\{W_e(T), W(T)\}$.

\end{proof}

As in other results concerning the quaternionic numerical range, next corollary shows that we can simply consider what happens in the complex plane. Given a quaternion $q=q_0+q_1i+q_2j+q_3k$, we define $\pi_{(1)}(q)=\re(q)=q_0$ and $\pi_{(i)}(q)=q_1$.

\begin{corollary}
Let $T\in\B(\Hi)$. Then $\overline{B(T)}=\ich\{B_e(T), B(T)\}$.
\end{corollary}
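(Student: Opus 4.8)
The plan is to deduce the corollary from Theorem~\ref{thm lancaster quaternion} by intersecting everything with $\C$, after recording a few structural facts about the bild. First I would observe that $B(T)$ and $B_e(T)$ are symmetric with respect to the real axis: if $q\in W(T)$ (resp. $q\in W_e(T)$) then $[q]\subseteq W(T)$ (resp. $[q]\subseteq W_e(T)$) by Proposition~\ref{prop properties ess n.r}(ii) and the analogous statement for $W(T)$, and $\bar q\in[q]$ for every quaternion. Second, $B_e(T)$ is convex, being the intersection of the convex set $W_e(T)$ (Theorem~\ref{thm convexity}) with the real two-dimensional subspace $\C$ of $\Hq$. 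Third, I would prove the auxiliary inclusion $\overline{W(T)}\cap\C\subseteq\overline{B(T)}$: if $q_n\in W(T)$ and $q_n\to x=x_0+x_1i\in\C$, then $\hat q_n:=\re(q_n)+|\im(q_n)|\,i$ has the same real part and modulus as $q_n$, hence is similar to it, so $\hat q_n\in B(T)$; since $\re(q_n)\to x_0$ and $|\im(q_n)|^2=|q_n|^2-\re(q_n)^2\to x_1^2$, we get $\hat q_n\to x_0+|x_1|\,i$, which is $x$ or $\bar x$, and the symmetry of $B(T)$ forces $x\in\overline{B(T)}$. Together with the trivial reverse inclusion this yields $\overline{W(T)}\cap\C=\overline{B(T)}$.

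With these in hand, the inclusion $\ich\{B_e(T),B(T)\}\subseteq\overline{B(T)}$ is immediate: we have $\ich\{B_e(T),B(T)\}\subseteq\ich\{W_e(T),W(T)\}=\overline{W(T)}$ by Theorem~\ref{thm lancaster quaternion}, and this set is contained in $\C$, hence in $\overline{W(T)}\cap\C=\overline{B(T)}$.

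For the converse inclusion, $\overline{B(T)}\subseteq\ich\{B_e(T),B(T)\}$, I would take $x=x_0+x_1i\in\overline{B(T)}\subseteq\overline{W(T)}=\ich\{W_e(T),W(T)\}$ and write $x=\alpha p+(1-\alpha)r$ with $p\in W_e(T)$, $r\in W(T)$, $0\le\alpha\le1$. If $\alpha\in\{0,1\}$ then $x\in B(T)$ or $x\in B_e(T)$, and both sets sit inside $\ich\{B_e(T),B(T)\}$ since $B(T)$ and $B_e(T)$ are nonempty. So assume $0<\alpha<1$ and, using the symmetry of $B_e(T)$ and $B(T)$, assume $x_1\ge0$. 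Comparing real and imaginary parts of $x=\alpha p+(1-\alpha)r$ gives $x_0=\alpha\re(p)+(1-\alpha)\re(r)$ and $x_1i=\alpha\,\im(p)+(1-\alpha)\,\im(r)$, whence the triangle and reverse-triangle inequalities give $\big|\,\alpha|\im(p)|-(1-\alpha)|\im(r)|\,\big|\le x_1\le\alpha|\im(p)|+(1-\alpha)|\im(r)|$. The complex numbers $\re(p)\pm|\im(p)|\,i$ are similar to $p$, so they lie in $W_e(T)\cap\C=B_e(T)$; by convexity of $B_e(T)$, so does $a:=\re(p)+s_0i$ for $s_0:=\big(x_1-(1-\alpha)|\im(r)|\big)/\alpha$, because the displayed bounds on $x_1$ amount exactly to $|s_0|\le|\im(p)|$. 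Likewise $b:=\re(r)+|\im(r)|\,i\in W(T)\cap\C=B(T)$, and a direct computation gives $\alpha a+(1-\alpha)b=x_0+x_1i=x$. Hence $x\in\ich\{B_e(T),B(T)\}$; the case $x_1<0$ is handled symmetrically, or by conjugating and invoking the symmetry of $\ich\{B_e(T),B(T)\}$.

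The step I expect to be the main obstacle is this last inclusion. The decomposition $x=\alpha p+(1-\alpha)r$ supplied by Theorem~\ref{thm lancaster quaternion} need not have $p$ or $r$ in $\C$, and one cannot rotate $p$ and $r$ into $\C$ by a single similarity. The resolution is to replace the numerical-range term $r$ by one of its two complex ``poles'' $\re(r)\pm|\im(r)|\,i$, and then to exploit the convexity of $B_e(T)$ — a genuine input coming from Theorem~\ref{thm convexity} — in order to slide the essential term $p$ along the segment joining its own two poles until the imaginary parts balance; the triangle inequalities above are exactly what guarantee that this slide stays inside that segment.
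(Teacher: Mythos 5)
Your proposal is correct and follows essentially the same route as the paper: intersect the Lancaster-type identity with $\C$, replace the $W(T)$-term of the decomposition by a complex representative of its similarity class, and use the convexity of $B_e(T)$ (from Theorem \ref{thm convexity}) to move the essential term along the segment between its two complex representatives until the imaginary parts match, with the triangle and reverse-triangle inequalities guaranteeing the target stays on that segment. The only cosmetic difference is that you compute the balancing point $s_0$ explicitly where the paper phrases the same step via an affine map of the segment $[\omega_{e,i}^*,\omega_{e,i}]$, and you spell out the auxiliary fact $\overline{W(T)}\cap\C=\overline{B(T)}$ that the paper uses implicitly.
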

\begin{proof}
From theorem \ref{thm lancaster quaternion} we have
\[
\ich\{B_e(T), B(T)\}\cap \C \subseteq \ich\{W_e(T), W(T)\}\cap \C=\overline{W(T)}\cap \C.
\]
We obtain that $\ich\{B_e(T), B(T)\} \subseteq \overline{B(T)}$.

For the converse inclusion, take an element $\bar\omega \in \overline{B(T)}$. According to theorem \ref{thm lancaster quaternion} there are $\omega \in W(T)$, $\omega_e \in W_e(T)$ and $\alpha \in [0,1]$, such that
\begin{equation}\label{omega_bar}
\bar\omega =\alpha \omega +(1-\alpha)\omega_e.
\end{equation}
Observe that when $\alpha=0$ or $\alpha=1$ the inclusion immediately follows. So suppose $\alpha\neq 0, 1$.

 We can write $\omega=a+bq$ and $w_e=c+dq_e$, where $a, b, c, d\in \bR$, $q\in \im(q)$,  $q_e\in \im(q_e)$ and $|q|=|q_e|=1$. Therefore, we have
 \[
 \bar\omega = (\alpha a+(1-\alpha)c)+(\alpha bq+(1-\alpha)dq_e).
 \]
 Note that $\alpha bq+(1-\alpha)dq_e\in \Span\{i\}$, since $\bar\omega\in \C$.
 Assume that $\bar\omega\in \C^+$. If  $\bar\omega\in \C^-$, the proof is analogous.

 By circularity of the bild, there are $\omega_{(i)} \in [\omega] \cap \C^+$ in the bild and $\omega_{e,(i)} \in [\omega_e] \cap \C^+$. We can write $\omega_i=a+|b|i$ and $\omega_{e,i}=c+|d|i$.

 Define $\overline{\omega}_i=\alpha\omega_i+(1-\alpha)\omega_{e,i}$, which can be written as
 \[
 \overline{\omega}_i=(\alpha a+(1-\alpha)c)+(\alpha |b|+(1-\alpha)|d|)i.
 \]
Clearly, $\pi_{(1)}(\bar\omega)=\pi_{(1)}(\bar\omega_i)$. On the other hand, since $\alpha bq+(1-\alpha)dq_e\in \Span\{i\}$ and $|q|=|q_e|=1$, we have
\begin{align*}
0 \leq \pi_{(i)}(\bar\omega) &= \Big|\pi_{(i)}(\bar\omega)i\Big|\\
&=\Big|\alpha bq+(1-\alpha)dq_e\Big|\\
&\leq\alpha \lvert b\rvert+(1-\alpha)\lvert d\rvert\\
&=\pi_{(i)}(\bar\omega_i).
\end{align*}

Assuming that $\alpha |b|-(1-\alpha)|d|\geq 0$, let now $\widetilde{\omega}_i=\alpha\omega_i+(1-\alpha)\omega^*_{e,i}$; otherwise, define $\widetilde{\omega}_i=\alpha\omega_i^*+(1-\alpha)\omega_{e,i}$.
Clearly, $\widetilde{\omega}_i\in \C^+$ and $\pi_{(1)}(\widetilde{\omega_i})=\pi_{(1)}(\overline{\omega})$. We have
\begin{align*}
0 \leq \pi_{(i)}(\widetilde{\omega}_i) &= \Big|\alpha |b|-(1-\alpha)|d|\Big|\\
&=\Big|\alpha \lvert bq\rvert-(1-\alpha)\lvert dq_e^*\rvert\Big|\\
&\leq \Big|\alpha bq-(1-\alpha)dq_e^*\Big|\\
&= \Big|\alpha bq+(1-\alpha)dq_e\Big|\\
&= \pi_{(i)}(\bar\omega),
\end{align*}
since $\alpha bq+(1-\alpha)dq_e\in \Span\{i\}$ and $\overline{\omega}\in\C^+$.

Then we have found two elements $\bar \omega_{i}$ and $\tilde \omega_{i}$, both in $\ich\{B_e(T), B(T)\}$, such that
\begin{align*}
  &\pi_{(1)}(\bar\omega)=\pi_{(1)} \big(\bar \omega_{i}\big)=\pi_{(1)}\big(\tilde \omega_{i}\big)\\
  0\leq &\pi_{(i)}\big(\tilde \omega_{i} \big)\leq \pi_{(i)}(\bar \omega) \leq \pi_{(i)} \big(\bar \omega_{i} \big).
\end{align*}

Now we will show that $\overline{\omega}$ is also in $\ich\{B_e(T), B(T)\}$.
Consider the affine transformation
\begin{align*}
  & f:B_e(T)\longrightarrow \C\\
&f(z)=\alpha \omega_i+(1-\alpha)z. \,
\end{align*}
Since $B_e(T)$ is convex, $[\omega^*_{e,i}, \omega_{e,i}]\subset B_e(T)$. Affine transformations map lines into lines so we have
\[
f([\omega^*_{e,i}, \omega_{e,i}])= [\widetilde{\omega}_i, \overline{\omega}_i].
\]
Observe that $\widetilde{\omega}_i\neq \overline{\omega}_i$, since $\alpha\neq 1$.

Since $\overline{\omega}\in [\widetilde{\omega}_i, \overline{\omega}_i]$, there exists $\eta\in [\omega^*_{e,i}, \omega_{e,i}]\subset B_e(T)$ such that $f(\eta)=\overline{\omega},$ that is,
$\alpha \omega_i+(1-\alpha)\eta=\overline{\omega}$.
We conclude that $\overline{\omega}\in \ich\{B_e(T), B(T)\}$.
\end{proof}

\begin{remark}\label{remark_lancaster}
In the complex setting \cite[Corollary $1$]{L} proves that the numerical range is closed if and only if the $W_{\C,e}(T)$ is a subset of the $W_\C(T)$. The relation in the previous result induces the idea that the same result might hold for quaternions. However, that is not the case.

Take the operator $T=\diag\{-1+i, 1+i\}\oplus diag\{s_n\}$, where $s_n$ is a sequence that runs over $(-1/2,1/2)i\cap \mathbb{Q}i$. Applying theorem \ref{theoEssNRseq} and theorem \ref{thm convexity}, we have
\[B_e(T)=[-i/2,i/2].\]
From theorem 4.2 in \cite{CDM6}, it follows
\[
\overline{B^+(T)}=\ch\{-1+i,1+i, -1/3, 1/3] \}.
\]
%\[ \overline{B(T)}=\ich\{[-1+i,1+i] \cup[-1-i,1-i], [-i/2,i/2] \}.\]
Nevertheless, the upper bild, and therefore the bild, is not closed. For example, the boundary line segment joining $-1/3$ to $-1+i$ does not belong to $B(T)$. Thus we have $B_e(T)=[-i/2,i/2]\subseteq B(T)$ but $B(T)$ is not closed.
\end{remark}

\bigskip

\end{document}